       \font\tenmsb=msbm10
       \font\sevenmsb=msbm7
       \font\fivemsb=msbm5
\let\amstexloaded@\relax\fi
       \def\spaces@{\space\space\space\space\space}
       \def\spaces@@{\spaces@\spaces@\spaces@\spaces@\spaces@}
       \def\space@.  {\futurelet\space@\relax}
       \def\Err@#1{\errhelp\defaulthelp@\errmessage{AmS-teX error: #1}}
       \def\relaxnext@{\let\next\relax}
       \def\accentfam@{7}
       \def\noaccents@{\def\accentfam@{0}}
       \def\Cal{\relaxnext@\ifmmode\let\next\Cal@\else
       \def\next{\Err@{Use \string\Cal\space only in math mode}}\fi\next}
       \def\Cal@#1{{\Cal@@{#1}}}
       \def\Cal@@#1{\noaccents@\fam\tw@#1}
       \def\Bbb{\relaxnext@\ifmmode\let\next\Bbb@\else
       \def\next{\Err@{Use \string\Bbb\space only in math mode}}\fi\next}
       \def\Bbb@#1{{\Bbb@@{#1}}}
       \def\Bbb@@#1{\noaccents@\fam\msbfam#1}
       \def\co{\tiny{\textcircled{\tiny\#}}}
\newtheorem{thm}{Theorem}[section]
\newtheorem{lem}[thm]{Lemma}
\newtheorem{rem}[thm]{Remark}
\newtheorem{iteration lemma}[thm]{iteration Lemma}
\newtheorem*{acknowledgements*}{ACKNOWLEDGEMENtS}
\begin{document}

\setlength{\columnsep}{5pt}
\title{\bf Core and Dual Core Inverses of a Sum of Morphisms}
\author{Tingting  Li\footnote{ E-mail: littnanjing@163.com},
\ Jianlong Chen\footnote{ Corresponding author. E-mail: jlchen@seu.edu.cn},
\ Sanzhang Xu\footnote{ E-mail: xusanzhang5222@126.com}\\
Department of  Mathematics, Southeast University \\  Nanjing 210096,  China }
     \date{}

\maketitle
\begin{quote}
{\textbf{}\small
Let $\mathscr{C}$ be an additive category with an involution $\ast$.
Suppose that
$\varphi : X \rightarrow X$ is a morphism of $\mathscr{C}$ with core inverse $\varphi^{\co} : X \rightarrow X$
and $\eta : X \rightarrow X$ is a morphism of $\mathscr{C}$ such that $1_X+\varphi^{\co}\eta$ is invertible.
Let $\alpha=(1_X+\varphi^{\co}\eta)^{-1},$
$\beta=(1_X+\eta\varphi^{\co})^{-1},$
$\varepsilon=(1_X-\varphi\varphi^{\co})\eta\alpha(1_X-\varphi^{\co}\varphi),$
$\gamma=\alpha(1_X-\varphi^{\co}\varphi)\beta^{-1}\varphi\varphi^{\co}\beta,$
$\sigma=\alpha\varphi^{\co}\varphi\alpha^{-1}(1_X-\varphi\varphi^{\co})\beta,$
$\delta=\beta^{\ast}(\varphi^{\co})^{\ast}\eta^{\ast}(1_X-\varphi\varphi^{\co})\beta.$
Then $f=\varphi+\eta-\varepsilon$ has a core inverse if and only if $1_X-\gamma$, $1_X-\sigma$ and $1_X-\delta$ are invertible.
Moreover,
the expression of the core inverse of $f$ is presented.
Let $R$ be a unital $\ast$-ring and $J(R)$ its Jacobson radical, if $a\in R^{\co}$ with core inverse $a^{\co}$ and $j\in J(R)$,
then $a+j\in R^{\co}$ if and only if $(1-aa^{\co})j(1+a^{\co}j)^{-1}(1-a^{\co}a)=0$.
We also give the similar results for the dual core inverse.

\textbf {Keywords:} {\small Core inverse, Dual core inverse, Morphism, invertibility, Jacobson radical.}

\textbf {AMS subject classifications:} {15A09, 16W10, 16U80.}
}
\end{quote}

\section{ Introduction }\label{a}
Let $\mathscr{C}$ be an additive category with an involution $\ast$. (See, for example, \cite[p. 131]{PR2}.)
Let $\varphi : X \rightarrow Y$ and $\chi : Y \rightarrow X$ be morphisms of $\mathscr{C}$.
Consider the following four equations:
\begin{center}
  $(1)$ $\varphi\chi\varphi=\varphi$,~~~$(2)$ $\chi\varphi\chi=\chi$,~~~$(3)$ $(\varphi\chi)^{\ast}=\varphi\chi$,~~~$(4)$ $(\chi\varphi)^{\ast}=\chi\varphi$.
\end{center}
Let $\varphi\{i,j,\cdots,l\}$ denote the set of morphisms $\chi$ which satisfy equations $(i),(j),\cdots,(l)$ from among equations $(1)$-$(4)$.
If $\varphi\{i,j,\cdots,l\}\neq \emptyset$,
then $\varphi$ is called $\{i,j,\cdots,l\}$-invertible.
A morphism $\chi\in \varphi\{i,j,\cdots,l\}$ is called an $\{i,j,\cdots,l\}$-inverse of $\varphi$ and denoted by $\varphi^{(i,j,\cdots,l)}$.
A $\{1\}$-inverse is called a von Neumann regular inverse or inner inverse.
If a morphism $\chi\in \varphi\{1,2,3,4\}$,
then it is called the Moore-Penrose inverse of $\varphi$.
If such $\chi$ exists,
then it is unique and denoted by $\varphi^{\dagger}$.
If $X=Y$,
$\chi\in \varphi\{1,2\}$ and $\varphi\chi=\chi\varphi$,
then $\chi$ is called the group inverse of $\varphi$.
If such $\chi$ exists,
then it is unique and denoted by $\varphi^{\#}$.
A morphism $\varphi : X \rightarrow X$ is said to be Hermitian if $\varphi^{\ast}=\varphi$.

Recall that a unital ring $R$ is said to be a unital $\ast$-ring if it has an involution provided that there is an anti-isomorphism $\ast$ such that
$(a^{\ast})^{\ast}=a, (a+b)^{\ast}=a^{\ast}+b^{\ast}$ and $(ab)^{\ast}=b^{\ast}a^{\ast}$ for all $a, b\in R$.
In 2010,
Baksalary and Trenkler introduced the core and dual core inverses of a complex matrix in \cite{OM}.
Raki\'{c} et al. \cite{DSR} generalized core inverses of a complex matrix to the case of an element in a ring with an involution $\ast$.
An element $x\in R$ is said to be a core inverse of $a$ if it satisfies
\begin{equation*}
\begin{split}
    axa=a,~xR=aR,~Rx=Ra^{\ast},
\end{split}
\end{equation*}
such an element $x$ is unique if it exists and denoted by $a^{\co}$.
And they also showed that $a^{\co}$ exists if and only if there exists $x\in R$ such that
\begin{equation*}
\begin{split}
    axa=a,~xax=x,~(ax)^{\ast}=ax,~ax^{2}=x,~xa^{2}=a.
\end{split}
\end{equation*}
In this case,
$x=a^{\co}$.
There is a dual concept of core inverses which is called dual core inverses.
The symbols $R^{-1}$, $R^{\dagger}$, $R^{\#}$, $R^{\{i,j,\cdots,l\}}$, $R^{\co}$ and $R_{\co}$ denote the set of all the invertible, Moore-Penrose invertible, group invertible, $\{i,j,\cdots,l\}$-invertible, core invertible and dual core invertible elements in $R$, respectively.

Hence one can define the notion of the core and dual core inverse in an additive category.
Let $\mathscr{C}$ be an additive category with an involution and $\varphi : X \rightarrow X$ a morphism of $\mathscr{C}$.
If there is a morphism $\chi : X \rightarrow X$ satisfying
\begin{center}
  $\varphi\chi\varphi=\varphi$,~$\chi\varphi\chi=\chi$,~$(\varphi\chi)^{\ast}=\varphi\chi$,~$\varphi\chi^2=\chi$,~$\chi\varphi^2=\varphi$,
\end{center}
then $\varphi$ is core invertible and $\chi$ is called the core inverse of $\varphi$.
If such $\chi$ exists,
then it is unique and denoted by $\varphi^{\co}$.
And the dual core inverse can be given dually.

Group inverses and Moore-Penrose inverses of morphisms were investigated some years ago.
(See, \cite{PR1}-\cite{PP}.)
In \cite{HP},
D. Huylebrouck and R. Puystjens gave a necessary and sufficient condition for the von Neumann regularity,
Moore-Penrose invertibility and group invertibility of $a+j$ in a ring $R$ with identity,
where $a$ is a von Neumann regular element of $R$ and $j$ is an element of the Jacobson radical of $R$.
In \cite{H},
D. Huylebrouck generalized these results to an additive category $\mathscr{C}$ under some sufficient conditions.
In \cite{You},
H. You and J.L. Chen proved these sufficient conditions were also necessary which completed Huylebrouck's results.
In this paper,
we give the necessary and sufficient conditions for the existence of core inverses and dual core inverses for $f=\varphi+\eta-\varepsilon$.
The core and dual core invertibility of $a+j$ is also considered in this paper,
where $a$ is a core invertible element of $R$ and $j$ is an element of the Jacobson radical of $R$.

Before investigate the core inverse of a sum of morphisms,
some auxiliary results should be presented.

\begin{lem} \cite[p. $201$]{Hartwig}\label{000}
Let $a\in R$,
we have the following results:\\
$(1)$ $a$ is $\{1,3\}$-invertible with $\{1,3\}$-inverse $x$ if and only if $x^{\ast}a^{\ast}a=a;$\\
$(2)$ $a$ is $\{1,4\}$-invertible with $\{1,4\}$-inverse $y$ if and only if $aa^{\ast}y^{\ast}=a.$
\end{lem}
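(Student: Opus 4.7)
The plan is to prove each direction of the biconditional in (1) directly, then note that (2) follows by an entirely dual argument (swapping left/right roles and replacing $a$ by $a^\ast$).

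For part (1), the forward direction is immediate: if $x$ is a $\{1,3\}$-inverse, then $axa=a$ and $(ax)^\ast=ax$, so $x^\ast a^\ast = (ax)^\ast = ax$, and right-multiplying by $a$ gives $x^\ast a^\ast a = axa = a$. The reverse direction is the substantive step. Starting from $x^\ast a^\ast a = a$, I would first right-multiply by $x$ to obtain
\begin{equation*}
 ax \;=\; (x^\ast a^\ast a)x \;=\; x^\ast a^\ast (ax) \;=\; (ax)^\ast(ax),
\end{equation*}
which shows $ax$ equals the manifestly Hermitian element $(ax)^\ast(ax)$; hence $(ax)^\ast = ax$. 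Once this self-adjointness is in hand, $ax = (ax)^\ast = x^\ast a^\ast$, so right-multiplication by $a$ gives $axa = x^\ast a^\ast a = a$. Thus $x\in a\{1,3\}$.

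For part (2), an exactly parallel argument works. The forward direction uses $(ya)^\ast = ya$ to rewrite $a^\ast y^\ast = ya$ and then $aa^\ast y^\ast = aya = a$. For the reverse direction, from $aa^\ast y^\ast = a$ I would left-multiply by $y$ to produce $ya = y(aa^\ast y^\ast) = (ya)(ya)^\ast$, which is Hermitian; then $(ya)^\ast = ya$ gives $a^\ast y^\ast = ya$, and therefore $aya = aa^\ast y^\ast = a$.

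The only genuine obstacle is spotting the trick in the reverse directions, namely recognizing that the hypothesis lets one rewrite $ax$ (respectively $ya$) as a product of the form $e^\ast e$, which is automatically self-adjoint; everything else is a one-line manipulation using associativity and the involution axioms $(uv)^\ast = v^\ast u^\ast$ and $u^{\ast\ast}=u$. No additional machinery from the paper is needed, and the argument is formulated purely in the ring $R$, so it transfers verbatim to the endomorphism ring of an object in the additive $\ast$-category $\mathscr C$.
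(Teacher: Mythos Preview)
Your proof is correct. The paper does not supply its own proof of this lemma; it merely cites it from Hartwig \cite[p.~201]{Hartwig} as a preliminary result, so there is nothing to compare against beyond noting that your argument is the standard one and fills in what the paper omits.
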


\begin{lem} \cite[Proposition $7$]{Hartwig}\label{111}
Let $a\in R$,
$a\in R^{\#}$ if and only if $a=a^{2}x=ya^{2}$ for some $x, y\in R$.
In this case, $a^{\#}=yax=y^{2}a=ax^{2}$.
\end{lem}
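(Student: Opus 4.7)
My plan is to handle the two directions of the equivalence separately, with most of the work concentrated in sufficiency, which will simultaneously yield the three claimed formulas for $a^{\#}$.

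For necessity, I would simply unpack the definition of the group inverse. From $aa^{\#}a = a$ together with the commutativity $aa^{\#} = a^{\#}a$, one reads off $a = a\cdot a^{\#}a = a^{2}a^{\#}$ and, symmetrically, $a = a^{\#}a^{2}$, so that $x = y = a^{\#}$ realizes the factorizations claimed.

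For sufficiency, assume $a = a^{2}x = ya^{2}$. The crucial preliminary identity is $ax = ya$; I would derive it in one line by multiplying $a = ya^{2}$ on the right by $x$ and collapsing $a^{2}x = a$. Setting $e := ax = ya$, the relations $ae = a^{2}x = a$ and $ea = ya^{2} = a$ are immediate, and these force the idempotence $e^{2} = (ya)(ax) = y(a\cdot ax) = ya = e$, as well as $aya = a(ya) = a(ax) = a$. I would then propose $z := yax$ as the candidate group inverse and verify the defining relations using only these derived identities: the computations $az = (aya)x = ax = e$ and $za = y(ax)a = y(ya)a = ya = e$ give $az = za = e$ at once, whence $aza = ae = a$ and $zaz = ze = z$ (using $e^{2} = e$ and $z = ye$). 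Finally the chains $z = yax = (ya)x = (ax)x = ax^{2}$ and $z = yax = y(ax) = y(ya) = y^{2}a$ produce the three equivalent expressions for $a^{\#}$ in one stroke, completing the lemma.

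The main obstacle is really only the identification of $ax = ya$; it is the linchpin on which every subsequent manipulation rests, since without it the products of $a$, $x$, and $y$ do not collapse cleanly in a non-commutative ring. Once that identity and the idempotent $e = ax = ya$ are in hand, the remaining verifications are mechanical, but they must be laid out carefully because at no point may one commute letters that are not already shown to commute on $a$.
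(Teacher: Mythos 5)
Your proof is correct, and it is essentially the classical argument (the one behind Hartwig's Proposition 7, which the paper only cites without reproducing): derive $ax=ya$, introduce the idempotent $e=ax=ya$ acting as an identity on $a$, and verify that $z=yax$ satisfies $aza=a$, $zaz=z$, $az=za$, with $z=ax^{2}=y^{2}a$ following from $ax=ya$. All the individual manipulations check out, so there is nothing to add beyond noting that your route coincides with the standard one.
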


\begin{lem} \cite[Theorem $2.6$ and $2.8$]{XSZ}\label{222}
Let $a\in R$,
we have the following results:\\
$(1)$ $a\in R^{\co}$ if and only if $a\in R^{\#}\cap R^{\{1,3\}}$. In this case, $a^{\co}=a^{\#}aa^{(1,3)}$.\\
$(2)$ $a\in R_{\co}$ if and only if $a\in R^{\#}\cap R^{\{1,4\}}$. In this case, $a_{\co}=a^{(1,4)}aa^{\#}$.
\end{lem}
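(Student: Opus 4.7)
I would establish (1) first and obtain (2) by a dual (left--right mirrored) argument. For (1), the two implications are treated separately.

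For ``$a\in R^{\co}\Rightarrow a\in R^{\#}\cap R^{\{1,3\}}$'', write $x=a^{\co}$ and exploit the five defining identities $axa=a$, $xax=x$, $(ax)^{\ast}=ax$, $ax^{2}=x$, $xa^{2}=a$. The first and third already exhibit $x$ as a $\{1,3\}$-inverse of $a$. For group invertibility, I would appeal to Lemma~\ref{111}: it suffices to produce $y,z\in R$ with $a=a^{2}z=ya^{2}$. The identity $xa^{2}=a$ supplies $y=x$ immediately; multiplying $ax^{2}=x$ first on the left by $a$ and then on the right by $a$, and using $axa=a$, yields $a^{2}(x^{2}a)=a$, so $z=x^{2}a$ works. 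Lemma~\ref{111} then also gives the formula $a^{\#}=x^{2}a$.

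For the converse, set $x=a^{\#}aa^{(1,3)}$ and verify the five core-inverse equations. The verification hinges on two preparatory simplifications: $ax=aa^{(1,3)}$, obtained by collapsing the factor $a\cdot a^{\#}a$ to $a$, and $xa=a^{\#}a$, obtained by collapsing $aa^{(1,3)}a$ to $a$. With these in hand, $axa=a$ and $xa^{2}=a$ follow at once, $(ax)^{\ast}=ax$ reduces to the Hermiticity of $aa^{(1,3)}$ (the defining $\{3\}$-condition for $a^{(1,3)}$), and $xax=x$ reduces to the idempotence of $a^{\#}a$.

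The main obstacle is the identity $ax^{2}=x$, which requires rewriting $aa^{(1,3)}\cdot a^{\#}aa^{(1,3)}$ so that a central copy of $aa^{(1,3)}a$ can collapse to $a$. I would first commute $a^{\#}$ with $a$ to rearrange the middle factor as $aa^{\#}$, producing $aa^{(1,3)}\cdot aa^{\#}\cdot a^{(1,3)}$; then $aa^{(1,3)}a=a$ reduces this expression to $a\cdot a^{\#}\cdot a^{(1,3)}=a^{\#}aa^{(1,3)}=x$. Since $a^{(1,3)}$ enjoys no commutation relations with $a^{\#}$, every simplification must be routed through either $aa^{(1,3)}a=a$ or $aa^{\#}=a^{\#}a$, and this is where the bookkeeping is most delicate. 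Part (2) then follows by the same argument with left and right interchanged, starting from the candidate $a^{(1,4)}aa^{\#}$, which is the mirror image of $a^{\#}aa^{(1,3)}$; the role played above by Lemma~\ref{000}(1) is now played by Lemma~\ref{000}(2).
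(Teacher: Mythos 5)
Your argument is correct. Note that the paper does not prove this lemma at all --- it is quoted from the cited reference [XSZ, Theorems 2.6 and 2.8] --- so there is nothing in-text to compare against; your direct verification (the forward direction via Lemma~\ref{111} with $a=xa^{2}=a^{2}(x^{2}a)$, and the converse by checking the five core-inverse equations for $a^{\#}aa^{(1,3)}$ using $ax=aa^{(1,3)}$, $xa=a^{\#}a$ and the commutation $aa^{\#}=a^{\#}a$) is exactly the standard argument and all the computations check out, including the delicate step $ax^{2}=aa^{(1,3)}aa^{\#}a^{(1,3)}=aa^{\#}a^{(1,3)}=x$. One cosmetic slip: your closing remark that Lemma~\ref{000}(2) now plays the role that Lemma~\ref{000}(1) played in part (1) is vacuous, since your proof of (1) never actually invoked Lemma~\ref{000} --- the $\{1,3\}$/$\{1,4\}$ conditions are used directly from the definitions, which is fine.
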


\begin{lem} \cite[Theorem~$3.4$]{Li}\label{n=1-core-inverse}
Let $a\in R$.
The following conditions are equivalent: \\
$(1)$ $a\in R^{\co}$; \\
$(2)$ there exists a unique projection $p$ such that $pa=0$, $u=a+p\in R^{-1}$;\\
$(3)$ there exists a Hermitian element $p$ such that $pa=0$, $u=a+p\in R^{-1}$.\\
In this case,
\begin{equation*}
\begin{split}
    a^{\co}=u^{-1}au^{-1}=(u^{\ast}u)^{-1}a^{\ast}.
\end{split}
\end{equation*}
\end{lem}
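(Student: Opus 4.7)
The plan is to prove the cyclic chain $(1) \Rightarrow (2) \Rightarrow (3) \Rightarrow (1)$ and then treat the uniqueness statement in $(2)$ and the two explicit formulas for $a^{\co}$. The implication $(2) \Rightarrow (3)$ is immediate. For $(1) \Rightarrow (2)$, I would take $p = 1 - aa^{\co}$ and verify that it is a Hermitian idempotent with $pa = 0$, using the core inverse identities $(aa^{\co})^{\ast} = aa^{\co}$, $aa^{\co} \cdot aa^{\co} = a(a^{\co}aa^{\co}) = aa^{\co}$, and $aa^{\co}a = a$. To produce an inverse of $u = a + p$ I would guess $u^{-1} = a^{\co} + 1 - a^{\co}a$ and verify $uu^{-1} = u^{-1}u = 1$ directly, relying also on $a(a^{\co})^{2} = a^{\co}$ and $a^{\co}a^{2} = a$.

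For $(3) \Rightarrow (1)$, the starting observation is that $p^{\ast} = p$ with $pa = 0$ forces $a^{\ast}p = (pa)^{\ast} = 0$, so $u^{\ast}a = a^{\ast}a + p^{\ast}a = a^{\ast}a$; Lemma \ref{000}(1) then says that $u^{-1}$ is a $\{1,3\}$-inverse of $a$, i.e.\ $au^{-1}a = a$ and $au^{-1}$ is Hermitian. From $pa = 0$ one also gets $ua = a^{2}$, hence $u^{-1}a^{2} = a$. The candidate for the core inverse is $x = u^{-1}au^{-1}$, and I would verify the five defining conditions. Four of them---$axa = a$, $xax = x$, $(ax)^{\ast} = ax$ (note $ax = au^{-1}$), and $xa^{2} = a$---fall out in one or two lines from $au^{-1}a = a$ and $u^{-1}a^{2} = a$. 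The delicate one is $ax^{2} = x$, which simplifies to $au^{-2}a = u^{-1}a$, equivalently $pu^{-2}a = 0$ (using $pu^{-1} = 1 - au^{-1}$). To crack this, I would exploit Hermiticity: the calculation $u^{\ast}p - pu = a^{\ast}p + p^{2} - pa - p^{2} = 0$ gives $u^{\ast}p = pu$, hence $pu^{-1} = (u^{\ast})^{-1}p$; then $pu^{-2}a = (u^{\ast})^{-1}(pu^{-1}a) = (u^{\ast})^{-1}(a - au^{-1}a) = 0$.

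For uniqueness of the projection in $(2)$, I would argue that any Hermitian idempotent with $eR = aR$ is uniquely determined by $a$, so the projection $au^{-1}$ produced from the $\{1,3\}$-inverse must equal $q := aa^{\co}$; consequently, working in the Pierce decomposition of $R$ along $q$, the element $u = a + p$ takes a block-triangular form whose $(1-q)R(1-q)$ diagonal entry is the idempotent $p$, and invertibility of $u$ forces $p$ to be the identity $1 - q$ of that corner, so $p = 1 - aa^{\co}$. The equality $u^{-1}au^{-1} = (u^{\ast}u)^{-1}a^{\ast}$ is a one-line consequence of $(u^{\ast})^{-1}a^{\ast} = au^{-1}$, itself just Hermiticity of $au^{-1}$. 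The genuine obstacle throughout the proof is condition $ax^{2} = x$ in $(3) \Rightarrow (1)$: without Hermiticity of $p$ the identity $pu^{-2}a = 0$ would fail, and the decisive reduction $u^{\ast}p = pu$ is precisely the place where the hypothesis $p^{\ast} = p$ is used in an essential way.
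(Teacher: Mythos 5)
Your argument is correct. Note that the paper does not prove this lemma at all --- it is imported from \cite{Li} --- so there is no internal proof to compare against; judged on its own terms, your cycle $(1)\Rightarrow(2)\Rightarrow(3)\Rightarrow(1)$ checks out: $p=1-aa^{\co}$ is indeed a projection annihilating $a$ with $(a+p)^{-1}=a^{\co}+1-a^{\co}a$; in $(3)\Rightarrow(1)$ the appeal to Lemma~\ref{000}(1) via $u^{\ast}a=a^{\ast}a$ is exactly right, and your reduction of the one delicate identity $ax^{2}=x$ to $pu^{-2}a=0$ through $u^{\ast}p=pu$ (the only place where Hermiticity of $p$ is truly needed) is valid, as are the uniqueness argument and the identity $(u^{\ast}u)^{-1}a^{\ast}=u^{-1}au^{-1}$ via $(u^{\ast})^{-1}a^{\ast}=au^{-1}$.

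Two minor remarks. In the uniqueness step, the phrase ``invertibility of $u$ forces $p$ to be the identity of that corner'' deserves its one-line justification: $p$ is an idempotent of the corner ring $(1-q)R(1-q)$ with right inverse $(1-q)u^{-1}(1-q)$ there, and an idempotent with a one-sided inverse equals the identity. In fact you can bypass the Pierce decomposition altogether: from $au^{-1}=aa^{\co}=q$ you get $pq=pau^{-1}=0$ and $qp=(pq)^{\ast}=0$, so $pR\subseteq(1-q)R$, while $pu^{-1}=1-au^{-1}=1-q$ gives $(1-q)R\subseteq pR$; thus $p$ and $1-q$ are Hermitian idempotents generating the same right ideal, hence equal by the very fact you already invoke. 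Also, instead of verifying all five defining equations in $(3)\Rightarrow(1)$, you could finish the way the paper finishes Theorem~\ref{core-inverse category}: $u^{-1}\in a\{1,3\}$, and $a=u^{-1}a^{2}=a^{2}(u^{-2}a)$ (the latter from $u^{-1}a=au^{-2}a$, which is your identity $pu^{-2}a=0$) give $a\in R^{\#}$ by Lemma~\ref{111}, whence $a\in R^{\co}$ by Lemma~\ref{222}; your direct verification is equally fine.
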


It should be pointed out,
the above lemmas are valid in an additive category with an involution $\ast$.

\begin{lem} \cite[Proposition~$1$]{H}\label{(12)-inverse}
Let $\mathscr{C}$ be an additive category.
Suppose that
$\varphi : X \rightarrow Y$ is a morphism of $\mathscr{C}$ with $\{1,2\}-$inverse $\varphi^{(1,2)}$
and $\eta : X \rightarrow Y$ is a morphism of $\mathscr{C}$ such that $1_X+\varphi^{(1,2)}\eta$ is invertible.
Let
\begin{equation*}
\begin{split}
    \varepsilon=(1_Y-\varphi\varphi^{(1,2)})\eta(1_X+\varphi^{(1,2)}\eta)^{-1}(1_X-\varphi^{(1,2)}\varphi),
\end{split}
\end{equation*}
then $f=\varphi+\eta-\varepsilon$ has a $\{1\}$-inverse
and $(1_X+\varphi^{(1,2)}\eta)^{-1}\varphi^{(1,2)}\in f\{1,2\}$.
Moreover,
if $\tau\in (\varphi+\eta)\{1\}$,
then we have $\tau\in \varepsilon\{1\}$.
\end{lem}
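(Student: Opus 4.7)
The plan is to verify directly that $g := \alpha\varphi^{(1,2)}$ lies in $f\{1,2\}$; the existence of a $\{1\}$-inverse of $f$ follows immediately. Write $\varphi^- := \varphi^{(1,2)}$ for brevity, and note that invertibility of $1_X+\varphi^-\eta$ forces $1_Y+\eta\varphi^-$ to be invertible; call its inverse $\beta$. The calculation rests on two intertwining identities
\begin{equation*}
\alpha\varphi^- \;=\; \varphi^-\beta, \qquad \beta\eta \;=\; \eta\alpha,
\end{equation*}
obtained by sandwiching the trivial identities $(1+\varphi^-\eta)\varphi^- = \varphi^-(1+\eta\varphi^-)$ and $(1+\eta\varphi^-)\eta = \eta(1+\varphi^-\eta)$ between $\alpha$ and $\beta$. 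A further useful relation is $\beta\varphi\varphi^- = \varphi\varphi^- + \beta - 1_Y$, which drops out of $(1+\eta\varphi^-)\varphi\varphi^- = \varphi\varphi^- + \eta\varphi^-$ together with $\beta\eta\varphi^- = 1_Y-\beta$.

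For the two composites, using $\varphi^-\varepsilon = 0$ (since $\varphi^-(1-\varphi\varphi^-) = 0$) and $\varepsilon\alpha\varphi^- = 0$ (since $(1-\varphi^-\varphi)\alpha\varphi^- = (1-\varphi^-\varphi)\varphi^-\beta = 0$), one obtains, after simplifying $\varphi^-\eta\alpha = 1_X-\alpha$ and $\eta\varphi^-\beta = 1_Y-\beta$,
\begin{equation*}
gf \;=\; 1_X - \alpha(1_X-\varphi^-\varphi),\qquad fg \;=\; 1_Y - (1_Y-\varphi\varphi^-)\beta.
\end{equation*}
The identity $gfg = g$ is then free, because $\alpha(1_X-\varphi^-\varphi)\alpha\varphi^- = \alpha(1_X-\varphi^-\varphi)\varphi^-\beta = 0$.

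The main obstacle is $fgf = f$, which reduces to proving $(1_Y-\varphi\varphi^-)\beta f = 0$. I would expand $f = \varphi + \eta - \varepsilon$ and dispatch each piece using the intertwiners. First, $\beta\varphi\varphi^- = \varphi\varphi^- + \beta - 1_Y$ gives $\beta - \beta\varphi\varphi^- = 1_Y-\varphi\varphi^-$, whence $\beta\varepsilon = (1-\varphi\varphi^-)\eta\alpha(1-\varphi^-\varphi) = \varepsilon$ and so $(1-\varphi\varphi^-)\beta\varepsilon = \varepsilon$. Second, $(1-\varphi\varphi^-)\beta\eta = (1-\varphi\varphi^-)\eta\alpha$ by $\beta\eta = \eta\alpha$. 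Third, combining $\beta = 1_Y - \eta\varphi^-\beta$ with $\varphi^-\beta = \alpha\varphi^-$ yields $\beta\varphi = \varphi - \eta\alpha\varphi^-\varphi$, so $(1-\varphi\varphi^-)\beta\varphi = -(1-\varphi\varphi^-)\eta\alpha\varphi^-\varphi$. Summing, the three contributions telescope to $(1-\varphi\varphi^-)\eta\alpha(1_X-\varphi^-\varphi) - \varepsilon = \varepsilon - \varepsilon = 0$.

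For the final clause, the decomposition $\varphi+\eta = \varphi\alpha^{-1} + (1-\varphi\varphi^-)\eta$ (and its left-right dual) gives the two factorisations
\begin{equation*}
\varepsilon \;=\; (\varphi+\eta)\alpha(1_X-\varphi^-\varphi) \;=\; (1_Y-\varphi\varphi^-)\beta(\varphi+\eta).
\end{equation*}
Given $\tau\in(\varphi+\eta)\{1\}$, substituting these two expressions on either side of $\tau$ in $\varepsilon\tau\varepsilon$ makes the middle block $(\varphi+\eta)\tau(\varphi+\eta)$ collapse to $\varphi+\eta$, so $\varepsilon\tau\varepsilon = (1-\varphi\varphi^-)\beta(\varphi+\eta)\alpha(1-\varphi^-\varphi) = (1-\varphi\varphi^-)\beta\varepsilon = (1-\varphi\varphi^-)\varepsilon = \varepsilon$, closing the argument.
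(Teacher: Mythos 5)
Your proof is correct: all the intertwining identities ($\alpha\varphi^{(1,2)}=\varphi^{(1,2)}\beta$, $\beta\eta=\eta\alpha$, $\beta\varphi\varphi^{(1,2)}=\varphi\varphi^{(1,2)}+\beta-1_Y$), the computations of $gf$ and $fg$, the reduction of $fgf=f$ to $(1_Y-\varphi\varphi^{(1,2)})\beta f=0$, and the two factorisations $\varepsilon=(\varphi+\eta)\alpha(1_X-\varphi^{(1,2)}\varphi)=(1_Y-\varphi\varphi^{(1,2)})\beta(\varphi+\eta)$ used for the last clause all check out. Note that the paper itself gives no proof of this lemma — it is quoted from Huylebrouck \cite[Proposition~1]{H} — so there is nothing in the paper to compare against; your direct verification that $(1_X+\varphi^{(1,2)}\eta)^{-1}\varphi^{(1,2)}$ is a $\{1,2\}$-inverse of $f$ is essentially the standard argument behind the cited result.
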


\section{Core and Dual Core Inverses of a Sum of Morphisms}\label{a}
Let $\mathscr{C}$ be an additive category with an involution $\ast$.
Suppose that
both $\varphi : X \rightarrow Y$ and $\eta : X \rightarrow Y$ are morphisms of $\mathscr{C}$.
We use the following notations:
\begin{equation*}
\begin{split}
    \alpha &~=(1_X+\varphi^{\tau}\eta)^{-1},\\
    \beta &~=(1_Y+\eta\varphi^{\tau})^{-1},\\
    \varepsilon &~=(1_Y-\varphi\varphi^{\tau})\eta\alpha(1_X-\varphi^{\tau}\varphi),\\
    \gamma &~=\alpha(1_X-\varphi^{\tau}\varphi)\eta\varphi^{\tau}\beta,\\
    \delta &~=\alpha\varphi^{\tau}\eta(1_Y-\varphi\varphi^{\tau})\beta,\\
    \lambda &~=\alpha(1_X-\varphi^{\tau}\varphi)\eta^{\ast}(\varphi^{\tau})^{\ast}\alpha^{\ast},\\
    \mu &~=\beta^{\ast}(\varphi^{\tau})^{\ast}\eta^{\ast}(1_Y-\varphi\varphi^{\tau})\beta,
\end{split}
\end{equation*}
where $\tau \in \{\#, (1,2,3), (1,2,4), \dagger\}$.

In $2001$,
You and Chen \cite{You} gave the  group inverse,
$\{1,2,4\}$-inverse,
$\{1,2,3\}$-inverse and Moore-Penrose inverse of a sum of morphisms in an additive category,
respectively.
The results are as follows.\\
(I) \cite[Proposition~$1$]{You}\label{group inverse}~Let $X=Y$ and $\tau=\#$.
If $\varphi$ is group invertible with group inverse $\varphi^{\#} : X \rightarrow X$ and $1_X+\varphi^{\#}\eta$ is invertible,
then the following conditions are equivalent:\\
(i) $f=\varphi+\eta-\varepsilon$ has a group inverse;\\
(ii) $1_X-\gamma$ and $1_X-\delta$ are invertible;\\
(iii) $1_X-\gamma$ is left invertible and $1_X-\delta$ is right invertible.\\
In this case,
\begin{equation*}
\begin{split}
    f^{\#}~=(1_X-\gamma)^{-1}\alpha\varphi^{\#}(1_X-\delta)^{-1},
\end{split}
\end{equation*}
\begin{equation*}
\begin{split}
    (1_X-\gamma)^{-1} &~=1_X-\varphi^{\#}\varphi+f^{\#}f\varphi^{\#}\varphi,\\
    (1_X-\delta)^{-1} &~=1_X-\varphi\varphi^{\#}+\varphi\varphi^{\#}ff^{\#}.
\end{split}
\end{equation*}
(II) \cite[Proposition~$2$]{You}\label{(124) inverse}~Let $\tau=(1,2,4)$.
If $\varphi$ is $\{1,2,4\}$-invertible with $\{1,2,4\}$-inverse $\varphi^{(1,2,4)} : Y \rightarrow X$ and $1_X+\varphi^{(1,2,4)}\eta$ is invertible,
then the following conditions are equivalent:\\
(i) $f=\varphi+\eta-\varepsilon$ is $\{1,2,4\}$-invertible;\\
(ii) $1_X-\lambda$ is invertible;\\
(iii) $1_X-\lambda$ is left invertible.\\
In this case,
\begin{equation*}
\begin{split}
    (1_X-\lambda)^{-1}\alpha\varphi^{(1,2,4)}\in f\{1,2,4\},
\end{split}
\end{equation*}
\begin{equation*}
\begin{split}
    (1_X-\lambda)^{-1}~=1_X-\varphi^{(1,2,4)}\varphi+f^{(1,2,4)}f\varphi^{(1,2,4)}\varphi.
\end{split}
\end{equation*}
(III) \cite[Proposition~$3$]{You}\label{(123) inverse}~Let $\tau=(1,2,3)$.
If $\varphi$ is $\{1,2,3\}$-invertible with $\{1,2,3\}$-inverse $\varphi^{(1,2,3)} : Y \rightarrow X$ and $1_X+\varphi^{(1,2,3)}\eta$ is invertible,
then the following conditions are equivalent:\\
(i) $f=\varphi+\eta-\varepsilon$ is $\{1,2,3\}$-invertible;\\
(ii) $1_Y-\mu$ is invertible;\\
(iii) $1_Y-\mu$ is right invertible.\\
In this case,
\begin{equation*}
\begin{split}
    \varphi^{(1,2,3)}\beta(1_Y-\mu)^{-1}\in f\{1,2,3\},
\end{split}
\end{equation*}
\begin{equation*}
\begin{split}
    (1_Y-\mu)^{-1}~=1_Y-\varphi\varphi^{(1,2,3)}+\varphi\varphi^{(1,2,3)}ff^{(1,2,3)}.
\end{split}
\end{equation*}
(IV) \cite[Proposition~$4$]{You}\label{Group inverse}~Let $\tau=\dagger$.
If $\varphi$ is Moore-Penrose invertible with Moore-Penrose inverse $\varphi^{\dagger} : Y \rightarrow X$ and $1_X+\varphi^{\dagger}\eta$ is invertible,
then the following conditions are equivalent:\\
(i) $f=\varphi+\eta-\varepsilon$ has an Moore-Penrose inverse;\\
(ii) $1_X-\lambda$ and $1_Y-\mu$ are invertible;\\
(iii) $1_X-\lambda$ is left invertible and $1_Y-\mu$ is right invertible.\\
In this case,
\begin{equation*}
\begin{split}
    f^{\dagger}~=(1_X-\lambda)^{-1}\alpha\varphi^{\dagger}(1_Y-\mu)^{-1},
\end{split}
\end{equation*}
\begin{equation*}
\begin{split}
    (1_X-\lambda)^{-1} &~=1_X-\varphi^{\dagger}\varphi+f^{\dagger}f\varphi^{\dagger}\varphi,\\
    (1_X-\mu)^{-1} &~=1_Y-\varphi\varphi^{\dagger}+\varphi\varphi^{\dagger}ff^{\dagger}.
\end{split}
\end{equation*}

Inspired by the above results,
we investigate similar results for core inverses and dual core inverses in an additive category with an involution $\ast$.
It should be pointed out,
the above notations are no longer used below.

\begin{thm} \label{core-inverse category}
Let $\mathscr{C}$ be an additive category with an involution $\ast$.
Suppose that
$\varphi : X \rightarrow X$ is a morphism of $\mathscr{C}$ with core inverse $\varphi^{\co}$
and $\eta : X \rightarrow X$ is a morphism of $\mathscr{C}$ such that $1_X+\varphi^{\co}\eta$ is invertible.
Let
\begin{equation*}
\begin{split}
    \alpha &~=(1_X+\varphi^{\co}\eta)^{-1},\\
    \beta &~=(1_X+\eta\varphi^{\co})^{-1},\\
    \varepsilon &~=(1_X-\varphi\varphi^{\co})\eta\alpha(1_X-\varphi^{\co}\varphi),\\
    \gamma &~=\alpha(1_X-\varphi^{\co}\varphi)\beta^{-1}\varphi\varphi^{\co}\beta,\\
    \sigma &~=\alpha\varphi^{\co}\varphi\alpha^{-1}(1_X-\varphi\varphi^{\co})\beta,\\
    \delta &~=\beta^{\ast}(\varphi^{\co})^{\ast}\eta^{\ast}(1_X-\varphi\varphi^{\co})\beta.
\end{split}
\end{equation*}
Then the following conditions are equivalent:\\
(i) $f=\varphi+\eta-\varepsilon$ has a core inverse;\\
(ii) $1_X-\gamma$, $1_X-\sigma$ and $1_X-\delta$ are invertible;\\
(iii) $1_X-\gamma$ is left invertible, both $1_X-\sigma$ and $1_X-\delta$ are right invertible.\\
In this case,
\begin{equation*}
\begin{split}
    f^{\co}~=(1_X-\gamma)^{-1}\alpha\varphi^{\co}(1_X-\delta)^{-1},
 \end{split}
\end{equation*}
\begin{equation*}
\begin{split}
    (1_X-\gamma)^{-1} &~=1_X-\varphi\varphi^{\co}+f^{\co}f\varphi\varphi^{\co},\\
    (1_X-\sigma)^{-1} &~=1_X-\varphi\varphi^{\co}+\varphi\varphi^{\co}f^{\co}f,\\
    (1_X-\delta)^{-1} &~=1_X-\varphi\varphi^{\co}+\varphi\varphi^{\co}ff^{\co}.
\end{split}
\end{equation*}
\end{thm}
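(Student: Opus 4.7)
The plan is to exploit the factorization $a^{\co}=a^{\#}aa^{(1,3)}$ from Lemma \ref{222}: I would break the problem into producing a $\{1,3\}$-inverse and a group inverse of $f$ separately, then recombine, and I would handle the equivalences (i)$\Leftrightarrow$(ii)$\Leftrightarrow$(iii) simultaneously by arguing that the proposed formulas for $(1_X-\gamma)^{-1}$, $(1_X-\sigma)^{-1}$, $(1_X-\delta)^{-1}$ are genuine two-sided inverses whenever either (i) or (iii) holds.

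For the $\{1,3\}$-piece I observe that $\varphi^{\co}$ is automatically a $\{1,2,3\}$-inverse of $\varphi$, so the earlier proposition (III) applies with $\tau=\varphi^{\co}$. A direct inspection shows that the $\alpha$, $\beta$, and $\mu$ defined there coincide with our $\alpha$, $\beta$, and $\delta$. Hence invertibility (or one-sided invertibility) of $1_X-\delta$ produces a $\{1,2,3\}$-inverse $\alpha\varphi^{\co}(1_X-\delta)^{-1}$ of $f$, with $(1_X-\delta)^{-1}=1_X-\varphi\varphi^{\co}+\varphi\varphi^{\co}ff^{\co}$ coming along as a byproduct. Lemma \ref{(12)-inverse} also records the useful rewriting $\alpha\varphi^{\co}=\varphi^{\co}\beta\in f\{1,2\}$.

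Next I would argue $f\in R^{\#}$ from invertibility of $1_X-\gamma$ and $1_X-\sigma$ by appealing to Lemma \ref{111}: it suffices to exhibit $u,v$ with $f=f^{2}u=vf^{2}$. Guided by the target formula $f^{\co}=(1_X-\gamma)^{-1}\alpha\varphi^{\co}(1_X-\delta)^{-1}$, the natural candidates are $u=\alpha\varphi^{\co}(1_X-\sigma)^{-1}$ and $v=(1_X-\gamma)^{-1}\alpha\varphi^{\co}$, with $(1_X-\sigma)^{-1}=1_X-\varphi\varphi^{\co}+\varphi\varphi^{\co}f^{\co}f$ and $(1_X-\gamma)^{-1}=1_X-\varphi\varphi^{\co}+f^{\co}f\varphi\varphi^{\co}$ as the corresponding two-sided inverses. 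Verification would expand $\beta^{-1}=1_X+\eta\varphi^{\co}$, $\alpha^{-1}=1_X+\varphi^{\co}\eta$ inside $\gamma$ and $\sigma$, and repeatedly apply $\varphi\varphi^{\co}\varphi=\varphi$, $\varphi^{\co}\varphi\varphi^{\co}=\varphi^{\co}$, together with the defining identity of $\varepsilon$ (which kills the cross term $(1_X-\varphi\varphi^{\co})\eta(1_X-\varphi^{\co}\varphi)$). Once $f^{\#}$ is in hand, Lemma \ref{222} gives $f^{\co}=f^{\#}f\cdot\alpha\varphi^{\co}(1_X-\delta)^{-1}$, and the explicit expression for $(1_X-\gamma)^{-1}$ collapses this product to the stated closed form.

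For the converse direction (i)$\Rightarrow$(iii), assuming $f^{\co}$ exists I would verify directly, by multiplying out and using only the core-inverse axioms for $f$ and $\varphi$, that $1_X-\varphi\varphi^{\co}+f^{\co}f\varphi\varphi^{\co}$ is a left inverse of $1_X-\gamma$, and that $1_X-\varphi\varphi^{\co}+\varphi\varphi^{\co}f^{\co}f$, $1_X-\varphi\varphi^{\co}+\varphi\varphi^{\co}ff^{\co}$ are right inverses of $1_X-\sigma$, $1_X-\delta$ respectively. Combined with the two-sided inverses produced along the way in (ii)$\Rightarrow$(i), this closes the cycle. The main obstacle will be the brute-force verification that the above $u$, $v$ satisfy $f=f^{2}u=vf^{2}$: because $\varphi^{\co}$ does not commute with $\varphi$, the correction factors $\alpha(1_X-\varphi^{\co}\varphi)\beta^{-1}$ and $\alpha^{-1}(1_X-\varphi\varphi^{\co})\beta$ built into $\gamma$ and $\sigma$ are precisely what absorbs the non-commutativity, and keeping that bookkeeping straight through the three summands of $f=\varphi+\eta-\varepsilon$ is the bulk of the technical work.
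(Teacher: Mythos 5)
Your route is in substance the paper's own: Lemma~\ref{(12)-inverse} gives $f_0=\alpha\varphi^{\co}=\varphi^{\co}\beta\in f\{1,2\}$, core invertibility is split into group invertibility plus $\{1,3\}$-invertibility via Lemma~\ref{222}, the group part is obtained from Lemma~\ref{111} with exactly the candidates $u=f_0(1_X-\sigma)^{-1}$, $v=(1_X-\gamma)^{-1}f_0$, and the $\delta$-part is delegated to You--Chen's result (III), which is legitimate because $\varphi^{\co}\in\varphi\{1,2,3\}$ and $ff^{(1,3)}$ does not depend on the choice of $\{1,3\}$-inverse. The computational core you defer to ``brute force'' is what the paper packages as the identities $(1_X-\gamma)f=f_0f^2$, $f(1_X-\sigma)=f^2f_0$, $f^{\ast}(1_X-\delta)=f^{\ast}ff_0$, together with $f\gamma=0$, $\sigma f=0$, $\delta f=0$.

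There are, however, two genuine gaps in how you close the equivalences. First, (iii)$\Rightarrow$(i) is not actually covered: your group-inverse step is phrased with the two-sided inverses $(1_X-\gamma)^{-1}$, $(1_X-\sigma)^{-1}$, which are not available under (iii), and your claim that the displayed formulas are two-sided inverses ``whenever (i) or (iii) holds'' is circular under (iii), since those formulas contain $f^{\co}$, whose existence is precisely what is to be proved. The repair is the paper's: from the invertibility-free identities above, a left inverse $\omega$ of $1_X-\gamma$ and right inverses $\lambda,\nu$ of $1_X-\sigma$, $1_X-\delta$ already give $f=\omega f_0f^2=f^2f_0\lambda$ and $f=\nu^{\ast}f_0^{\ast}f^{\ast}f$, which feed Lemmas~\ref{111}, \ref{000} and \ref{222}. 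Second, for (i)$\Rightarrow$(ii) you verify only one side of each inverse (a left inverse of $1_X-\gamma$, right inverses of $1_X-\sigma$ and $1_X-\delta$); that yields (i)$\Rightarrow$(iii) but not the invertibility asserted in (ii), nor the validity of the closed expressions $(1_X-\gamma)^{-1}=1_X-\varphi\varphi^{\co}+f^{\co}f\varphi\varphi^{\co}$ and $(1_X-\sigma)^{-1}=1_X-\varphi\varphi^{\co}+\varphi\varphi^{\co}f^{\co}f$ on which the stated formula for $f^{\co}$ rests. There is no general principle in an additive category upgrading a one-sided inverse to a two-sided one, and result (I) of You--Chen cannot be invoked here because $\gamma$ and $\sigma$ are built from $\varphi^{\co}$, not $\varphi^{\#}$; so the missing sides, e.g. $(1_X-\gamma)(1_X-\varphi\varphi^{\co}+f^{\co}f\varphi\varphi^{\co})=1_X$ (proved in the paper from $(1_X-\gamma)f^{\co}f\varphi\varphi^{\co}=\varphi\varphi^{\co}$, which uses $f^{\co}=f(f^{\co})^2$, and from $\gamma(1_X-\varphi\varphi^{\co})=0$), must be computed separately; only for $1_X-\delta$ does the citation of (III) give two-sidedness for free.
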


\begin{proof}
While the method of this proof is similar to You and Chen's (see \cite{You}),
there is still enough different about them.

By Lemma~\ref{(12)-inverse},
$(1_X+\varphi^{\co}\eta)^{-1}\varphi^{\co}\in f\{1,2\}$.

Let $f_0=(1_X+\varphi^{\co}\eta)^{-1}\varphi^{\co}=\alpha\varphi^{\co}=\varphi^{\co}\beta,$
then
\begin{equation*}
\begin{split}
    \varphi^{\co}f=\varphi^{\co}(\varphi+\eta-\varepsilon)=\varphi^{\co}\varphi+\varphi^{\co}\eta=\varphi^{\co}\varphi(1_X+\varphi^{\co}\eta)=\varphi^{\co}\varphi\alpha^{-1},
\end{split}
\end{equation*}
and $f\varphi^{\co}=\beta^{-1}\varphi\varphi^{\co}.$
So $f_0f=\alpha\varphi^{\co}f=\alpha\varphi^{\co}\varphi\alpha^{-1}$,
and
\begin{equation*}
\begin{split}
    1_X-f_0f
    &~=1_X-\alpha\varphi^{\co}\varphi\alpha^{-1}~=\alpha(1_X-\varphi^{\co}\varphi)\alpha^{-1}\\
    &~=\alpha(1_X-\varphi^{\co}\varphi)(1_X+\varphi^{\co}\eta)~=\alpha(1_X-\varphi^{\co}\varphi).
\end{split}
\end{equation*}
Similarly,
we have $ff_0=\beta^{-1}\varphi\varphi^{\co}\beta$ and $1_X-ff_0~=(1_X-\varphi\varphi^{\co})\beta.$\\
Further,
\begin{equation*}
\begin{split}
    (1_X-f_0f)ff_0=\alpha(1_X-\varphi^{\co}\varphi)\beta^{-1}\varphi\varphi^{\co}\beta=\gamma,
\end{split}
\end{equation*}
\begin{equation*}
\begin{split}
    f_0f(1_X-ff_0)=\alpha\varphi^{\co}\varphi\alpha^{-1}(1_X-\varphi\varphi^{\co})\beta=\sigma,
\end{split}
\end{equation*}
\begin{equation*}
\begin{split}
    (ff_0)^{\ast}(1_X-ff_0)
    &~=(\beta^{-1}\varphi\varphi^{\co}\beta)^{\ast}(1_X-\varphi\varphi^{\co})\beta\\
    &~=\beta^{\ast}\varphi\varphi^{\co}(\beta^{-1})^{\ast}(1_X-\varphi\varphi^{\co})\beta\\
    &~=\beta^{\ast}[(1_X-\varphi\varphi^{\co})\beta^{-1}\varphi\varphi^{\co}]^{\ast}\beta\\
    &~=\beta^{\ast}[(1_X-\varphi\varphi^{\co})(1_X+\eta\varphi^{\co})\varphi\varphi^{\co}]^{\ast}\beta\\
    &~=\beta^{\ast}[(1_X-\varphi\varphi^{\co})(\varphi\varphi^{\co}+\eta\varphi^{\co})]^{\ast}\beta\\
    &~=\beta^{\ast}[(1_X-\varphi\varphi^{\co})\eta\varphi^{\co}]^{\ast}\beta\\
    &~=\delta.
\end{split}
\end{equation*}
Therefore,
we obtain
\begin{equation*}
\begin{split}
    f\gamma=0,~~~\sigma f=0,~~~\delta f=0,
\end{split}
\end{equation*}
moreover,
\begin{equation*}
\begin{split}
    f_0f^2
    &~=f-f+f_0f^2=f-(1_X-f_0f)f\\
    &~=[1_X-(1_X-f_0f)ff_0]f=(1_X-\gamma)f,
\end{split}
\end{equation*}
\begin{equation*}
\begin{split}
    f^2f_0
    &~=f-f+f^2f_0=f-f(1_X-ff_0)\\
    &~=f[1_X-f_0f(1_X-ff_0)]=f(1_X-\sigma),
\end{split}
\end{equation*}
\begin{equation*}
\begin{split}
    f^{\ast}ff_0
    &~=f^{\ast}-f^{\ast}+f^{\ast}ff_0=f^{\ast}-f^{\ast}(1_X-ff_0)\\
    &~=f^{\ast}[1_X-(ff_0)^{\ast}(1_X-ff_0)]=f^{\ast}(1_X-\delta).
\end{split}
\end{equation*}

Now we are ready to show the equivalence of three conditions.

$(i)\Rightarrow (ii)$.
The first step is to show that $1_X-\varphi\varphi^{\co}+f^{\co}f\varphi\varphi^{\co}$ is the inverse of $1_X-\gamma$.\\
Note that
\begin{equation*}
\begin{split}
    (1_X-\gamma)f^{\co}f
    &~=(1_X-\gamma)ff^{\co}f^{\co}f=f_0f^{2}f^{\co}f^{\co}f=f_0f\\
    &~=\alpha\varphi^{\co}\varphi\alpha^{-1}=\alpha\varphi^{\co}\varphi(1_X+\varphi^{\co}\eta)=\alpha(\varphi^{\co}\varphi+\varphi^{\co}\eta)\\
    &~=\alpha(1_X+\varphi^{\co}\eta+\varphi^{\co}\varphi-1_X)=\alpha(\alpha^{-1}+\varphi^{\co}\varphi-1_X)\\
    &~=1_X+\alpha(\varphi^{\co}\varphi-1_X).
\end{split}
\end{equation*}
Post-multiplication $\varphi\varphi^{\co}$ on the equality above yields
\begin{equation*}
\begin{split}
    (1_X-\gamma)f^{\co}f\varphi\varphi^{\co}=\varphi\varphi^{\co}.
\end{split}
\end{equation*}
As
\begin{equation*}
\begin{split}
    \gamma(1_X-\varphi\varphi^{\co})
    &~=\alpha(1_X-\varphi^{\co}\varphi)\beta^{-1}\varphi\varphi^{\co}\beta(1_X-\varphi\varphi^{\co})\\
    &~=\alpha(1_X-\varphi^{\co}\varphi)\beta^{-1}\varphi\alpha\varphi^{\co}(1_X-\varphi\varphi^{\co})\\
    &~=0,
\end{split}
\end{equation*}
we obtain
\begin{equation*}
\begin{split}
    &~ ~~~(1_X-\gamma)(1_X-\varphi\varphi^{\co}+f^{\co}f\varphi\varphi^{\co})\\
    &~=1_X-\varphi\varphi^{\co}-\gamma(1_X-\varphi\varphi^{\co})+(1_X-\gamma)f^{\co}f\varphi\varphi^{\co}\\
    &~=1_X-\varphi\varphi^{\co}-0+\varphi\varphi^{\co}\\
    &~=1_X.
\end{split}
\end{equation*}
So $1_X-\varphi\varphi^{\co}+f^{\co}f\varphi\varphi^{\co}$ is the right inverse of $1_X-\gamma$.
Next,
we prove that $1_X-\varphi\varphi^{\co}+f^{\co}f\varphi\varphi^{\co}$ is also the left inverse of $1_X-\gamma$.\\
Note that
\begin{equation*}
\begin{split}
    \gamma
    &~=\alpha(1_X-\varphi^{\co}\varphi)\beta^{-1}\varphi\varphi^{\co}\beta\\
    &~=\alpha(1_X-\varphi^{\co}\varphi)(1_X+\eta\varphi^{\co})\varphi\varphi^{\co}\beta\\
    &~=\alpha(1_X-\varphi^{\co}\varphi)(\varphi\varphi^{\co}+\eta\varphi^{\co})\beta\\
    &~=\alpha(1_X-\varphi^{\co}\varphi)\eta\varphi^{\co}\beta,
\end{split}
\end{equation*}
$1_X-\varphi^{\co}\eta\alpha=\alpha$ and $1_X-\eta\varphi^{\co}\beta=\beta$,
thus we have
\begin{equation*}
\begin{split}
    \varphi\varphi^{\co}\gamma
    &~=\varphi\varphi^{\co}\alpha(1_X-\varphi^{\co}\varphi)\eta\varphi^{\co}\beta\\
    &~=\varphi\varphi^{\co}(1_X-\varphi^{\co}\eta\alpha)(1_X-\varphi^{\co}\varphi)\eta\varphi^{\co}\beta\\
    &~=\varphi\varphi^{\co}(1_X-\varphi^{\co}\varphi)\eta\varphi^{\co}\beta-\varphi^{\co}\eta\alpha(1_X-\varphi^{\co}\varphi)\eta\varphi^{\co}\beta\\
    &~=\varphi\varphi^{\co}(1_X-\varphi^{\co}\varphi)\eta\varphi^{\co}\beta+(\alpha-1_X)(1_X-\varphi^{\co}\varphi)\eta\varphi^{\co}\beta\\
    &~=\varphi\varphi^{\co}(1_X-\varphi^{\co}\varphi)\eta\varphi^{\co}\beta+\alpha(1_X-\varphi^{\co}\varphi)\eta\varphi^{\co}\beta-(1_X-\varphi^{\co}\varphi)\eta\varphi^{\co}\beta\\
    &~=\alpha(1_X-\varphi^{\co}\varphi)\eta\varphi^{\co}\beta-(1_X-\varphi\varphi^{\co})(1_X-\varphi^{\co}\varphi)\eta\varphi^{\co}\beta\\
    &~=\gamma-(1_X-\varphi\varphi^{\co})\eta\varphi^{\co}\beta\\
    &~=\gamma-(1_X-\varphi\varphi^{\co})(1_X-\beta).
\end{split}
\end{equation*}
So $(1_X-\varphi\varphi^{\co})\gamma=(1_X-\varphi\varphi^{\co})(1_X-\beta)$,
which implies
\begin{equation*}
\begin{split}
    (1_X-\varphi\varphi^{\co})(1_X-\gamma)=(1_X-\varphi\varphi^{\co})\beta.
\end{split}
\end{equation*}
Furthermore,
\begin{equation*}
\begin{split}
    f^{\co}f\varphi\varphi^{\co}\gamma
    &~=f^{\co}f[\gamma-(1_X-\varphi\varphi^{\co})(1_X-\beta)]\\
    &~=f^{\co}f\gamma-f^{\co}f(1_X-\varphi\varphi^{\co})(1_X-\beta),\\
    &~=-f^{\co}f(1_X-\varphi\varphi^{\co})(1_X-\beta).
\end{split}
\end{equation*}
In addition,
\begin{equation*}
\begin{split}
    &~~~~~1_X-\varphi\varphi^{\co}+f^{\co}f\varphi\varphi^{\co}\\
    &~=1_X+\eta\varphi^{\co}-f\varphi^{\co}+f^{\co}f\varphi\varphi^{\co}\\
    &~=1_X+\eta\varphi^{\co}-f^{\co}f(f\varphi^{\co}-\varphi\varphi^{\co})\\
    &~=1_X+\eta\varphi^{\co}-f^{\co}f\eta\varphi^{\co}\\
    &~=f^{\co}f+(1_X-f^{\co}f)(1_X+\eta\varphi^{\co})\\
    &~=f^{\co}f+(1_X-f^{\co}f)\beta^{-1}.
\end{split}
\end{equation*}
Therefore,
\begin{equation*}
\begin{split}
    &~~~~~(1_X-\varphi\varphi^{\co}+f^{\co}f\varphi\varphi^{\co})(1_X-\gamma)\\
    &~=(1_X-\varphi\varphi^{\co})(1_X-\gamma)+f^{\co}f\varphi\varphi^{\co}-f^{\co}f\varphi\varphi^{\co}\gamma\\
    &~=(1_X-\varphi\varphi^{\co})\beta+f^{\co}f\varphi\varphi^{\co}+f^{\co}f(1_X-\varphi\varphi^{\co})(1_X-\beta)\\
    &~=(1_X-\varphi\varphi^{\co})\beta+f^{\co}f\varphi\varphi^{\co}+f^{\co}f(1_X-\beta)-f^{\co}f\varphi\varphi^{\co}+f^{\co}f\varphi\varphi^{\co}\beta\\
    &~=f^{\co}f(1_X-\beta)+(1_X-\varphi\varphi^{\co}+f^{\co}f\varphi\varphi^{\co})\beta\\
    &~=f^{\co}f(1_X-\beta)+[f^{\co}f+(1_X-f^{\co}f)\beta^{-1}]\beta\\
    &~=f^{\co}f-f^{\co}f\beta+f^{\co}f\beta+1_X-f^{\co}f\\
    &~=1_X.
\end{split}
\end{equation*}
Hence $1_X-\gamma$ is invertible with inverse $(1_X-\gamma)^{-1}=1_X-\varphi\varphi^{\co}+f^{\co}f\varphi\varphi^{\co}.$

The second step is to prove that $1_X-\varphi\varphi^{\co}+\varphi\varphi^{\co}f^{\co}f$ is the inverse of $1_X-\sigma$.
On the one hand,
\begin{equation*}
\begin{split}
    f^{\co}f(1_X-\sigma)
    &~=f^{\co}f^2f_0=ff_0=\beta^{-1}\varphi\varphi^{\co}\beta=(1_X+\eta\varphi^{\co})\varphi\varphi^{\co}\beta\\
    &~=(\varphi\varphi^{\co}+\eta\varphi^{\co})\beta=(1_X+\eta\varphi^{\co}+\varphi\varphi^{\co}-1_X)\beta\\
    &~=1_X+(\varphi\varphi^{\co}-1_X)\beta.
\end{split}
\end{equation*}
Pre-multiplication $\varphi\varphi^{\co}$ on the equality above yields
\begin{equation*}
\begin{split}
    \varphi\varphi^{\co}f^{\co}f(1_X-\sigma)=\varphi\varphi^{\co}.
\end{split}
\end{equation*}
And because
\begin{equation*}
\begin{split}
    (1_X-\varphi\varphi^{\co})\sigma
    &~=(1_X-\varphi\varphi^{\co})\alpha\varphi^{\co}\varphi\alpha^{-1}(1_X-\varphi\varphi^{\co})\beta\\
    &~=(1_X-\varphi\varphi^{\co})\varphi^{\co}\beta\varphi\alpha^{-1}(1_X-\varphi\varphi^{\co})\beta\\
    &~=0,
\end{split}
\end{equation*}
we have
\begin{equation*}
\begin{split}
    &~~~~~(1_X-\varphi\varphi^{\co}+\varphi\varphi^{\co}f^{\co}f)(1_X-\sigma)\\
    &~=(1_X-\varphi\varphi^{\co})(1_X-\sigma)+\varphi\varphi^{\co}f^{\co}f(1_X-\sigma)\\
    &~=1_X-\varphi\varphi^{\co}+\varphi\varphi^{\co}\\
    &~=1_X.
\end{split}
\end{equation*}
On the other hand,
as $1_X-\beta\eta\varphi^{\co}=\beta$,
we obtain
\begin{equation*}
\begin{split}
    \sigma\varphi\varphi^{\co}
    &~=\alpha\varphi^{\co}\varphi\alpha^{-1}(1_X-\varphi\varphi^{\co})\beta\varphi\varphi^{\co}\\
    &~=\alpha\varphi^{\co}\varphi\alpha^{-1}(1_X-\varphi\varphi^{\co})(1_X-\beta\eta\varphi^{\co})\varphi\varphi^{\co}\\
    &~=\alpha\varphi^{\co}\varphi\alpha^{-1}(1_X-\varphi\varphi^{\co})\varphi\varphi^{\co}-\alpha\varphi^{\co}\varphi\alpha^{-1}(1_X-\varphi\varphi^{\co})\beta\eta\varphi^{\co}\varphi\varphi^{\co}\\
    &~=-\alpha\varphi^{\co}\varphi\alpha^{-1}(1_X-\varphi\varphi^{\co})\beta\eta\varphi^{\co}\\
    &~=-\alpha\varphi^{\co}\varphi\alpha^{-1}(1_X-\varphi\varphi^{\co})(1_X-\beta)\\
    &~=\sigma-\alpha\varphi^{\co}\varphi\alpha^{-1}(1_X-\varphi\varphi^{\co}).
\end{split}
\end{equation*}
Thus we have
\begin{equation*}
\begin{split}
    \sigma(1_X-\varphi\varphi^{\co})=\alpha\varphi^{\co}\varphi\alpha^{-1}(1_X-\varphi\varphi^{\co}),
\end{split}
\end{equation*}
and
\begin{equation*}
\begin{split}
    \sigma\varphi\varphi^{\co}f^{\co}f
    &~=[\sigma-\alpha\varphi^{\co}\varphi\alpha^{-1}(1_X-\varphi\varphi^{\co})]f^{\co}f\\
    &~=\sigma f^{\co}f-\alpha\varphi^{\co}\varphi\alpha^{-1}(1_X-\varphi\varphi^{\co})f^{\co}f\\
    &~=\sigma ff^{\co}f^{\co}f-\alpha\varphi^{\co}\varphi\alpha^{-1}(1_X-\varphi\varphi^{\co})f^{\co}f\\
    &~=-\alpha\varphi^{\co}\varphi\alpha^{-1}(1_X-\varphi\varphi^{\co})f^{\co}f.
\end{split}
\end{equation*}
In addition,
\begin{equation*}
\begin{split}
    &~~~~\varphi\varphi^{\co}+\alpha\varphi^{\co}\varphi\alpha^{-1}(1_X-\varphi\varphi^{\co})\\
    &~=\varphi\varphi^{\co}+\alpha\varphi^{\co}\varphi(1_X+\varphi^{\co}\eta)(1_X-\varphi\varphi^{\co})\\
    &~=\alpha[\alpha^{-1}\varphi\varphi^{\co}+\varphi^{\co}\varphi(1_X+\varphi^{\co}\eta)(1_X-\varphi\varphi^{\co})]\\
    &~=\alpha[(1_X+\varphi^{\co}\eta)\varphi\varphi^{\co}+(\varphi^{\co}\varphi+\varphi^{\co}\eta)(1_X-\varphi\varphi^{\co})]\\
    &~=\alpha[(1_X+\varphi^{\co}\eta)\varphi\varphi^{\co}+\varphi^{\co}\varphi+\varphi^{\co}\eta-(\varphi^{\co}\varphi+\varphi^{\co}\eta)\varphi\varphi^{\co}]\\
    &~=\alpha[\varphi^{\co}\varphi+\varphi^{\co}\eta+(1_X+\varphi^{\co}\eta-\varphi^{\co}\varphi-\varphi^{\co}\eta)\varphi\varphi^{\co}]\\
    &~=\alpha(\varphi^{\co}\varphi+\varphi^{\co}\eta)\\
    &~=\alpha\varphi^{\co}f.
\end{split}
\end{equation*}
Therefore
\begin{equation*}
\begin{split}
    &~~~~(1_X-\sigma)(1_X-\varphi\varphi^{\co}+\varphi\varphi^{\co}f^{\co}f)\\
    &~=(1_X-\sigma)(1_X-\varphi\varphi^{\co})+(1_X-\sigma)\varphi\varphi^{\co}f^{\co}f\\
    &~=1_X-\varphi\varphi^{\co}-\sigma(1_X-\varphi\varphi^{\co})+\varphi\varphi^{\co}f^{\co}f-\sigma\varphi\varphi^{\co}f^{\co}f\\
    &~=1_X-\varphi\varphi^{\co}-\alpha\varphi^{\co}\varphi\alpha^{-1}(1_X-\varphi\varphi^{\co})+\varphi\varphi^{\co}f^{\co}f+\alpha\varphi^{\co}\varphi\alpha^{-1}(1_X-\varphi\varphi^{\co})f^{\co}f\\
    &~=1_X-\varphi\varphi^{\co}+\varphi\varphi^{\co}f^{\co}f-\alpha\varphi^{\co}\varphi\alpha^{-1}(1_X-\varphi\varphi^{\co})(1_X-f^{\co}f)\\
    &~=1_X-\varphi\varphi^{\co}(1_X-f^{\co}f)-\alpha\varphi^{\co}\varphi\alpha^{-1}(1_X-\varphi\varphi^{\co})(1_X-f^{\co}f)\\
    &~=1_X-[\varphi\varphi^{\co}+\alpha\varphi^{\co}\varphi\alpha^{-1}(1_X-\varphi\varphi^{\co})](1_X-f^{\co}f)\\
    &~=1_X-\alpha\varphi^{\co}f(1_X-f^{\co}f)\\
    &~=1_X.
\end{split}
\end{equation*}
Thus $1_X-\sigma$ is invertible with inverse $(1_X-\sigma)^{-1}=1_X-\varphi\varphi^{\co}+\varphi\varphi^{\co}f^{\co}f.$

In the end,
$1_X-\delta$ is invertible with inverse $(1_X-\delta)^{-1}=1_X-\varphi\varphi^{\co}+\varphi\varphi^{\co}ff^{\co}$ can be deduced immediately by \cite[Proposition~$3$]{You}\label{(123) inverse}
and the fact that the core inverse is a $\{1,2,3\}$-inverse.

$(ii)\Rightarrow (iii)$. Obviously.

$(iii)\Rightarrow (i)$.
Assume that $\omega$ is the left inverse of $1_X-\gamma$.
$\nu$ and $\lambda$ are the right inverses of $1_X-\delta$ and $1_X-\sigma$,
respectively.
Then we have
\begin{equation}\label{01}
\begin{split}
    f=\omega(1_X-\gamma)f=\omega f_0f^2,
\end{split}
\end{equation}
\begin{equation}\label{02}
\begin{split}
    f=f(1_X-\sigma)\lambda=f^2f_0\lambda,
\end{split}
\end{equation}
and
\begin{equation}\label{03}
\begin{split}
    f=(f^{\ast})^{\ast}=(f^{\ast}(1_X-\delta)\nu)^{\ast}=(f^{\ast}ff_0\nu)^{\ast}=\nu^{\ast}f_0^{\ast}f^{\ast}f.
\end{split}
\end{equation}
By Lemma~\ref{111},
equalities (\ref{01}) and (\ref{02}) tell us that $f$ is group invertible with group inverse $f^{\#}=\omega f_0ff_0\lambda=\omega f_0\lambda$.
And by Lemma~\ref{000},
equality (\ref{03}) shows that $f$ is $\{1,3\}$-invertible with $f_0v\in f\{1,3\}$.
Hence,
$f$ is core invertible by Lemma~\ref{222}.
Moreover,
\begin{equation*}
\begin{split}
    f^{\co}=f^{\#}ff^{(1,3)}=\omega f_0\lambda ff_0\nu.
\end{split}
\end{equation*}

From $(i)\Rightarrow (ii)$,
we know that $1_X-\gamma$,
$1_X-\sigma$ and $1_X-\delta$ are invertible with inverse $\omega=(1_X-\gamma)^{-1}$,
$\lambda=(1_X-\sigma)^{-1}$ and $\nu=(1_X-\delta)^{-1}$,
respectively.
As $\sigma f=0$,
we have $(1_X-\sigma)f=f$,
that is to say $f=(1_X-\sigma)^{-1}f=\lambda f.$
Therefore,
\begin{equation*}
\begin{split}
    f^{\co}=\omega f_0\lambda ff_0\nu=\omega f_0ff_0\nu=\omega f_0\nu=(1_X-\gamma)^{-1}\alpha\varphi^{\co}(1_X-\delta)^{-1}.
\end{split}
\end{equation*}
\end{proof}

\begin{rem}
Conditions as in Theorem~\ref{core-inverse category},
if $1_X-\delta$ is invertible,
then $ff_0$ is core invertible with $(ff_0)^{\co}=ff_0(1_X-\delta)^{-1}$.
\begin{proof}
Since $1_X-ff_0=(1_X-\varphi\varphi^{\co})\beta$,
\begin{equation*}
\begin{split}
    1_X-\delta-ff_0
    &~=(1_X-ff_0)-\delta\\
    &~=(1_X-\varphi\varphi^{\co})\beta-\beta^{\ast}(\varphi^{\co})^{\ast}\eta^{\ast}(1_X-\varphi\varphi^{\co})\beta\\
    &~=(1_X-\eta\varphi^{\co}\beta)^{\ast}(1_X-\varphi\varphi^{\co})\beta\\
    &~=\beta^{\ast}(1_X-\varphi\varphi^{\co})\beta.
\end{split}
\end{equation*}
Set $q=\beta^{\ast}(1_X-\varphi\varphi^{\co})\beta=1_X-\delta-ff_0$,
then $q=q^{\ast}$, $qff_0=0$ and $q+ff_0=1_X-\delta$ is invertible.
Thus $ff_0$ is core invertible and $(ff_0)^{\co}=(1_X-\delta)^{-1}ff_0(1_X-\delta)^{-1}$ by Lemma~\ref{n=1-core-inverse}.
Since $\delta f=0$,
$(1_X-\delta)f=f$,
which implies $f=(1_X-\delta)^{-1}f$.
Hence
\begin{equation*}
\begin{split}
    (ff_0)^{\co}=(1_X-\delta)^{-1}ff_0(1_X-\delta)^{-1}=ff_0(1_X-\delta)^{-1}.
 \end{split}
\end{equation*}
\end{proof}
\end{rem}

There is a result for the dual core inverse,
which corresponds to Theorem~\ref{core-inverse category},
as follows.

\begin{thm} \label{dual-core-inverse category}
Let $\mathscr{C}$ be an additive category with an involution $\ast$.
Suppose that
$\varphi : X \rightarrow X$ is a morphism of $\mathscr{C}$ with dual core inverse $\varphi_{\co} : X \rightarrow X$
and $\eta : X \rightarrow X$ is a morphism of $\mathscr{C}$ such that $1_X+\varphi_{\co}\eta$ is invertible.
Let
\begin{equation*}
\begin{split}
    \alpha &~=(1_X+\varphi_{\co}\eta)^{-1},\\
    \beta &~=(1_X+\eta\varphi_{\co})^{-1},\\
    \varepsilon &~=(1_X-\varphi\varphi_{\co})\eta\alpha(1_X-\varphi_{\co}\varphi),\\
    \rho &~=\alpha\varphi_{\co}\varphi\alpha^{-1}(1_X-\varphi\varphi_{\co})\beta,\\
    \zeta &~=\alpha(1_X-\varphi_{\co}\varphi)\beta^{-1}\varphi\varphi_{\co}\beta,\\
    \xi &~=\alpha(1_X-\varphi_{\co}\varphi)\eta^{\ast}(\varphi_{\co})^{\ast}\alpha^{\ast}.
\end{split}
\end{equation*}
Then the following conditions are equivalent:\\
(i) $f=\varphi+\eta-\varepsilon$ has a dual core inverse;\\
(ii) $1_X-\rho$, $1_X-\zeta$ and $1_X-\xi$ are invertible;\\
(iii) $1_X-\rho$ is right invertible, both $1_X-\zeta$ and $1_X-\xi$ are left invertible.\\
In this case,
\begin{equation*}
\begin{split}
    f_{\co}~=(1_X-\xi)^{-1}\alpha\varphi_{\co}(1_X-\rho)^{-1},
 \end{split}
\end{equation*}
\begin{equation*}
\begin{split}
    (1_X-\rho)^{-1} &~=1_X-\varphi_{\co}\varphi+\varphi_{\co}\varphi ff_{\co},\\
    (1_X-\zeta)^{-1} &~=1_X-\varphi_{\co}\varphi+ff_{\co}\varphi_{\co}\varphi,\\
    (1_X-\xi)^{-1} &~=1_X-\varphi_{\co}\varphi+f_{\co}f\varphi_{\co}\varphi.
\end{split}
\end{equation*}
\end{thm}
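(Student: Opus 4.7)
The proof closely mirrors that of Theorem~\ref{core-inverse category}, with Lemma~\ref{222}(2) (characterizing dual core invertibility as group plus $\{1,4\}$-invertibility) replacing Lemma~\ref{222}(1), and the Hermitian idempotent $\varphi_{\co}\varphi$ playing the role that $\varphi\varphi^{\co}$ did in the core case. By Lemma~\ref{(12)-inverse}, $f_0 := \alpha\varphi_{\co} = \varphi_{\co}\beta$ is a $\{1,2\}$-inverse of $f$, and the preliminary reductions are formally identical to before: $f_0 f = \alpha\varphi_{\co}\varphi\alpha^{-1}$, $ff_0 = \beta^{-1}\varphi\varphi_{\co}\beta$, $1_X - f_0 f = \alpha(1_X - \varphi_{\co}\varphi)$, and $1_X - ff_0 = (1_X - \varphi\varphi_{\co})\beta$. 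From these one reads off $\rho = f_0 f(1_X - ff_0)$ and $\zeta = (1_X - f_0 f)ff_0$ directly.

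The only genuinely new computation is the identification $\xi = (1_X - f_0 f)(f_0 f)^{\ast}$; this relies on Hermiticity of $\varphi_{\co}\varphi$ together with the auxiliary identity $\varphi_{\co}^{\ast}\varphi_{\co}\varphi = \varphi_{\co}^{\ast}$, obtained by taking $\ast$ of $\varphi_{\co}\varphi\varphi_{\co} = \varphi_{\co}$. Since $ff_0 f = f$, we obtain the basic vanishings $f\zeta = 0$, $\rho f = 0$ and $f\xi = 0$, which rearrange to
\begin{equation*}
f_0 f^2 = (1_X - \zeta)f,\qquad f^2 f_0 = f(1_X - \rho),\qquad f_0 f f^{\ast} = (1_X - \xi) f^{\ast},
\end{equation*}
the last using $(f_0 f)^{\ast} f^{\ast} = (ff_0 f)^{\ast} = f^{\ast}$.

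For $(iii)\Rightarrow(i)$, left invertibility of $1_X - \zeta$ and right invertibility of $1_X - \rho$ combine with the first two displayed identities to yield $f = x f^2$ and $f = f^2 y$, so Lemma~\ref{111} produces $f^{\#}$. Left invertibility of $1_X - \xi$ applied to $f_0 f f^{\ast} = (1_X - \xi)f^{\ast}$ gives $\omega' f_0 f f^{\ast} = f^{\ast}$; taking $\ast$ yields $f f^{\ast} (\omega' f_0)^{\ast} = f$, so $\omega' f_0 \in f\{1,4\}$ by Lemma~\ref{000}(2). Lemma~\ref{222}(2) then furnishes $f_{\co}$, and the closed-form expression $f_{\co} = (1_X - \xi)^{-1}\alpha\varphi_{\co}(1_X - \rho)^{-1}$ falls out by the same simplification as in Theorem~\ref{core-inverse category}, using $f\mu = f$ (since $\rho f = 0$ and $1_X - \rho$ is invertible) together with $f_0 f f_0 = f_0$.

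For $(i) \Rightarrow (ii)$, the formulas for $(1_X - \rho)^{-1}$ and $(1_X - \zeta)^{-1}$ are verified by direct computation parallel to the treatment of $1_X - \sigma$ and $1_X - \gamma$ in Theorem~\ref{core-inverse category}, with $\varphi_{\co}\varphi$ systematically replacing $\varphi\varphi^{\co}$; this is the step I expect to be most tedious but essentially mechanical. The $(1_X - \xi)^{-1}$ case, by contrast, is free: since every dual core inverse is also a $\{1,2,4\}$-inverse, $\xi$ is precisely the quantity $\lambda$ of \cite[Proposition~$2$]{You} applied with $\varphi^{(1,2,4)} = \varphi_{\co}$ and $f^{(1,2,4)} = f_{\co}$, delivering $(1_X - \xi)^{-1} = 1_X - \varphi_{\co}\varphi + f_{\co}f\varphi_{\co}\varphi$. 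Finally $(ii) \Rightarrow (iii)$ is trivial.
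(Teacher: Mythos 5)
Your proposal is correct and is essentially the argument the paper intends: Theorem~\ref{dual-core-inverse category} is stated without proof precisely because it is the dual of Theorem~\ref{core-inverse category}, and your identifications $\rho=f_0f(1_X-ff_0)$, $\zeta=(1_X-f_0f)ff_0$, $\xi=(1_X-f_0f)(f_0f)^{\ast}$ (using $(\varphi_{\co}\varphi)^{\ast}=\varphi_{\co}\varphi$ and $\varphi_{\co}^{\ast}\varphi_{\co}\varphi=\varphi_{\co}^{\ast}$, with $f_0=\alpha\varphi_{\co}$), together with Lemma~\ref{000}(2), Lemma~\ref{111}, Lemma~\ref{222}(2) and \cite[Proposition~2]{You} for $(1_X-\xi)^{-1}$, are exactly the dualized ingredients of the paper's proof of Theorem~\ref{core-inverse category}. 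One bookkeeping slip worth fixing: in the final simplification of $f_{\co}=f^{(1,4)}ff^{\#}=(1_X-\xi)^{-1}f_0\,f\,(1_X-\zeta)^{-1}f_0(1_X-\rho)^{-1}$, the middle factor collapses because $f\zeta=0$ gives $f(1_X-\zeta)^{-1}=f$, not because of "$\rho f=0$ and $1_X-\rho$ invertible" as your parenthetical says (that only yields $(1_X-\rho)^{-1}f=f$, which is not the identity needed there); since $f\zeta=0$ is among the vanishings you already derived, the repair is immediate.
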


\section{Core and Dual Core Inverses of a Sum with a radical element}\label{b}
Let $R$ be a unital $\ast$-ring and $J(R)$ its Jacobson radical.
As a matter of convenience,
we use the following notation:
\begin{center}
  $\varepsilon_\tau=(1-aa^\tau)j(1+a^\tau j)^{-1}(1-a^\tau a)$,
\end{center}
where $\tau \in \{(1), (1,2,3), (1,2,4), \dagger, \#\}$ and $j\in J(R)$.

In \cite{HP},
Huylebrouck and Puystjens proved the following results.\\
(I)~If $a\in R^{\{1\}}$, then $a+j\in R^{\{1\}}$ if and only if $\varepsilon_{(1)}=0$.\\
(II)~If $a\in R^{\dagger}$, then $a+j\in R^{\dagger}$ if and only if $\varepsilon_{\dagger}=0$.\\
(III)~If $a\in R^{\#}$, then $a+j\in R^{\#}$ if and only if $\varepsilon_{\#}=0$.

In \cite{You},
You and Chen told us:\\
(IV)~If $a\in R^{\{1,2,3\}}$, then $a+j\in R^{\{1,2,3\}}$ if and only if $\varepsilon_{(1,2,3)}=0$.\\
(V)~If $a\in R^{\{1,2,4\}}$, then $a+j\in R^{\{1,2,4\}}$ if and only if $\varepsilon_{(1,2,4)}=0$.\\
Moreover,
the expressions of $(a+j)^{(1,2)}$,
$(a+j)^{\dagger}$,
$(a+j)^{\#}$,
$(a+j)^{(1,2,3)}$,
$(a+j)^{(1,2,4)}$ are presented, respectively.

Next,
we will show that the core invertible element has a similar result as the above.

\begin{thm} \label{core-inverse}
Let $R$ be a unital $\ast$-ring and $J(R)$ its Jacobson radical.
If $a\in R^{\co}$ with core inverse $a^{\co}$ and $j\in J(R)$,
then
\begin{center}
  $a+j\in R^{\co}$ if and only if $\varepsilon=(1-aa^{\co})j(1+a^{\co}j)^{-1}(1-a^{\co}a)=0$.
\end{center}
In this case,
\begin{equation*}
\begin{split}
    (a+j)^{\co}=(1-\gamma)^{-1}(1+a^{\co}j)^{-1}a^{\co}(1-\delta)^{-1},
\end{split}
\end{equation*}
where
\begin{equation*}
\begin{split}
    \gamma &~=(1+a^{\co}j)^{-1}(1-a^{\co}a)(1+ja^{\co})aa^{\co}(1+ja^{\co})^{-1},\\
    \delta &~=(1+(a^{\co})^{\ast}j^{\ast})^{-1}(a^{\co})^{\ast}j^{\ast}(1-aa^{\co})(1+ja^{\co})^{-1}.
\end{split}
\end{equation*}
\end{thm}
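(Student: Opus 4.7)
The plan is to identify $R$ with a one-object additive category with involution, so that Theorem~\ref{core-inverse category} applies with $\varphi=a$ and $\eta=j$; the hypothesis $j\in J(R)$ gives $a^{\co}j\in J(R)$ and hence invertibility of $1+a^{\co}j$. Necessity will follow from Lemma~\ref{(12)-inverse} together with the fact that regular elements of $J(R)$ vanish, while sufficiency will reduce to checking that the three elements $1-\gamma$, $1-\sigma$, $1-\delta$ of Theorem~\ref{core-inverse category} are automatically units when $j\in J(R)$, at which point Theorem~\ref{core-inverse category} directly delivers the claimed formula.

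For the direction $a+j\in R^{\co}\Rightarrow\varepsilon=0$: any core inverse of $a+j$ is in particular a $\{1\}$-inverse $\tau$, and Lemma~\ref{(12)-inverse} forces $\tau\in\varepsilon\{1\}$, so $\varepsilon$ is von Neumann regular. But $\varepsilon=(1-aa^{\co})j(1+a^{\co}j)^{-1}(1-a^{\co}a)\in J(R)$ because $J(R)$ is a two-sided ideal. From $\varepsilon=\varepsilon x\varepsilon$ one gets $\varepsilon(1-x\varepsilon)=0$, and since $x\varepsilon\in J(R)$ the element $1-x\varepsilon$ is a unit, forcing $\varepsilon=0$.

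For the converse, assume $\varepsilon=0$, so $f=\varphi+\eta-\varepsilon=a+j$. Using $a^{\co}aa^{\co}=a^{\co}$ together with $a^{\co}a^{2}=a$ one checks that $(1-a^{\co}a)aa^{\co}=0$, which collapses $\gamma$ to $\alpha(1-a^{\co}a)ja^{\co}\beta\in J(R)$; likewise $\delta=\beta^{\ast}(a^{\co})^{\ast}j^{\ast}(1-aa^{\co})\beta$ lies in $J(R)$ because $J(R)$ is $\ast$-stable. The main obstacle is $\sigma=\alpha a^{\co}a\alpha^{-1}(1-aa^{\co})\beta$, which simplifies to $\alpha[(a^{\co}a-aa^{\co})+a^{\co}j(1-aa^{\co})]\beta$ and is congruent to $a^{\co}a-aa^{\co}$ modulo $J(R)$ but is not itself in $J(R)$. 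The rescue is the identity $(1-a^{\co}a+aa^{\co})(1+a^{\co}a-aa^{\co})=1=(1+a^{\co}a-aa^{\co})(1-a^{\co}a+aa^{\co})$, a one-line expansion using the idempotence of $a^{\co}a$ and $aa^{\co}$ together with $a^{\co}a\cdot aa^{\co}=aa^{\co}$ (from $a^{\co}a^{2}=a$) and $aa^{\co}\cdot a^{\co}a=a^{\co}a$ (from $a(a^{\co})^{2}=a^{\co}$). Thus $1-a^{\co}a+aa^{\co}$ is a unit, and $1-\sigma$ is this unit plus a radical element, hence itself a unit.

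With all three invertibilities in place, Theorem~\ref{core-inverse category} yields $(a+j)^{\co}=(1-\gamma)^{-1}(1+a^{\co}j)^{-1}a^{\co}(1-\delta)^{-1}$, and substituting the definitions $\alpha=(1+a^{\co}j)^{-1}$, $\beta=(1+ja^{\co})^{-1}$ into the formulas for $\gamma$ and $\delta$ reproduces the closed forms in the statement. The genuine difficulty throughout is the invertibility of $1-\sigma$: unlike the group, Moore--Penrose and $\{1,2,3\}$ cases treated by You and Chen, it cannot be read off from $J(R)$ alone and depends crucially on the two compatibility identities $a^{\co}a\cdot aa^{\co}=aa^{\co}$ and $aa^{\co}\cdot a^{\co}a=a^{\co}a$ coupling the two non-commuting projections $a^{\co}a$ and $aa^{\co}$.
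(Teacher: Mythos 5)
Your proposal is correct, and its sufficiency half takes a genuinely different route from the paper's. The necessity direction coincides with the paper: the core inverse of $a+j$ is in particular a $\{1\}$-inverse, Lemma~\ref{(12)-inverse} turns it into a $\{1\}$-inverse of $\varepsilon$, and a von Neumann regular element of $J(R)$ must vanish. For sufficiency, however, the paper does not verify the hypotheses of Theorem~\ref{core-inverse category} at all: from $(1+a^{\co}j)^{-1}a^{\co}\in(a+j)\{1,2\}$ it extracts $(1-aa^{\co})(1+ja^{\co})^{-1}(a+j)=0$, builds the Hermitian element $q=[(1+ja^{\co})^{-1}]^{\ast}(1-aa^{\co})(1+ja^{\co})^{-1}$ satisfying $q(a+j)=0$ with $a+j+q$ invertible (a radical perturbation of $a+p$, $p=1-aa^{\co}$), and concludes $a+j\in R^{\co}$ from Lemma~\ref{n=1-core-inverse}; Theorem~\ref{core-inverse category} is then invoked only in the direction (i)$\Rightarrow$(ii) to justify the displayed formula. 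You instead establish condition (ii) of Theorem~\ref{core-inverse category} directly: $\gamma=\alpha(1-a^{\co}a)ja^{\co}\beta$ and $\delta$ lie in $J(R)$ (using that the radical is $\ast$-invariant), while $\sigma\equiv a^{\co}a-aa^{\co}\pmod{J(R)}$ and $1-a^{\co}a+aa^{\co}$ is a unit with inverse $1+a^{\co}a-aa^{\co}$, which indeed follows from the idempotency of $a^{\co}a$, $aa^{\co}$ together with $a^{\co}a\cdot aa^{\co}=aa^{\co}$ and $aa^{\co}\cdot a^{\co}a=a^{\co}a$; hence $1-\gamma$, $1-\sigma$, $1-\delta$ are units and the theorem delivers existence and the formula in one stroke. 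Your computations check out. What each approach buys: the paper's argument bypasses the $\gamma,\sigma,\delta$ bookkeeping for the existence statement and exhibits the structural fact that core invertibility of $a+j$ comes from a Hermitian annihilator making $a+j+q$ a unit; your argument dispenses with Lemma~\ref{n=1-core-inverse} entirely, stays inside Theorem~\ref{core-inverse category}, and makes transparent why the extra $\sigma$-condition (which has no analogue in You and Chen's group and Moore--Penrose cases) is automatically satisfied for radical perturbations. One cosmetic caveat: a ring is the endomorphism ring of a one-object \emph{preadditive} rather than additive category, but since the proof of Theorem~\ref{core-inverse category} only manipulates endomorphisms of the single object $X$, this transfer is harmless and is exactly what the paper itself does tacitly.
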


\begin{proof}
Remark first that,
if $j\in J(R)$,
then $1+a^{\co}j\in R^{-1}$ and $j^{\ast}\in J(R)$.

Set $\phi=(a+j)^{\co}$,
then $\phi\in \varepsilon\{1\}$ by Lemma~\ref{(12)-inverse}.
This shows that the element $\varepsilon\in J(R)$ is von Neumann regular,
so it must be zero, that is to say $\varepsilon=(1-aa^{\co})j(1+a^{\co}j)^{-1}(1-a^{\co}a)=0$.

On the contrary,
suppose $\varepsilon=(1-aa^{\co})j(1+a^{\co}j)^{-1}(1-a^{\co}a)=0$,
then it is easy to see that $(1+a^{\co}j)^{-1}a^{\co}\in (a+j)\{1,2\}$ by Lemma~\ref{(12)-inverse} and the fact that the core inverse is a $\{1,2\}$-inverse.
Thus,
we have
\begin{equation*}
\begin{split}
    (a+j)(1+a^{\co}j)^{-1}a^{\co}(a+j)=(a+j),
\end{split}
\end{equation*}
which implies
\begin{equation}\label{04}
\begin{split}
    [1-(a+j)(1+a^{\co}j)^{-1}a^{\co}](a+j)=0,
\end{split}
\end{equation}
where
\begin{equation*}
\begin{split}
    &~~~~~1-(a+j)(1+a^{\co}j)^{-1}a^{\co}\\
    &~=1-(a+j)a^{\co}(1+ja^{\co})^{-1}\\
    &~=(1+ja^{\co})(1+ja^{\co})^{-1}-(a+j)a^{\co}(1+ja^{\co})^{-1}\\
    &~=[(1+ja^{\co})-(a+j)a^{\co}](1+ja^{\co})^{-1}\\
    &~=(1-aa^{\co})(1+ja^{\co})^{-1}.
\end{split}
\end{equation*}
Hence the equality (\ref{04}) can be written as
\begin{equation}\label{05}
\begin{split}
    (1-aa^{\co})(1+ja^{\co})^{-1}(a+j)=0.
\end{split}
\end{equation}
Since $a\in R^{\co}$,
set $p=1-aa^{\co}$,
then $p$ is a Hermitian element such that $pa=0$, $a+p\in R^{-1}$ by the proof of Lemma~\ref{n=1-core-inverse}.
Let
\begin{equation*}
\begin{split}
    q=[(1+ja^{\co})^{-1}]^{\ast}p(1+ja^{\co})^{-1}=[(1+ja^{\co})^{-1}]^{\ast}(1-aa^{\co})(1+ja^{\co})^{-1},
\end{split}
\end{equation*}
then $q=q^{\ast}$ and
\begin{equation*}
\begin{split}
      q(a+j)=[(1+ja^{\co})^{-1}]^{\ast}(1-aa^{\co})(1+ja^{\co})^{-1}(a+j)~\stackrel{(\ref{05})}{=}0.
\end{split}
\end{equation*}
Moreover,
we have
\begin{equation*}
\begin{split}
    a+q
    &~=a+[(1+ja^{\co})^{-1}]^{\ast}p(1+ja^{\co})^{-1}\\
    &~=[(1+ja^{\co})^{-1}]^{\ast}[(1+ja^{\co})^{\ast}a(1+ja^{\co})+p](1+ja^{\co})^{-1},
\end{split}
\end{equation*}
where
\begin{equation*}
\begin{split}
    &~ ~~~(1+ja^{\co})^{\ast}a(1+ja^{\co})+p\\
    &~=a+aja^{\co}+(a^{\co})^{\ast}j^{\ast}a+(a^{\co})^{\ast}j^{\ast}aja^{\co}+p\\
    &~=(a+p)+aja^{\co}+(a^{\co})^{\ast}j^{\ast}a+(a^{\co})^{\ast}j^{\ast}aja^{\co}\\
    &~=(a+p)[1+(a+p)^{-1}(aja^{\co}+(a^{\co})^{\ast}j^{\ast}a+(a^{\co})^{\ast}j^{\ast}aja^{\co})]
\end{split}
\end{equation*}
is invertible which follows from the property of Jacobson radical and the fact that $a+p\in R^{-1}$.
Thus $a+q\in R^{-1}$.
Therefore,
\begin{equation*}
\begin{split}
    a+j+q=(a+q)[1+(a+q)^{-1}j]\in R^{-1}.
\end{split}
\end{equation*}
In conclusion,
$q$ is a Hermitian element such that $q(a+j)=0$, $a+j+q\in R^{-1}$.
Applying Lemma~\ref{n=1-core-inverse},
we can get that $a+j\in R^{\co}$.
Therefore,
$1_X-\gamma$, $1_X-\sigma$ and $1_X-\delta$ are invertible by Theorem~\ref{core-inverse category},
where
\begin{equation*}
\begin{split}
    \gamma &~=(1+a^{\co}j)^{-1}(1-a^{\co}a)(1+ja^{\co})aa^{\co}(1+ja^{\co})^{-1},\\
    \sigma &~=(1+a^{\co}j)^{-1}a^{\co}a(1+a^{\co}j)(1-aa^{\co})(1+ja^{\co})^{-1},\\
    \delta &~=(1+(a^{\co})^{\ast}j^{\ast})^{-1}(a^{\co})^{\ast}j^{\ast}(1-aa^{\co})(1+ja^{\co})^{-1}.
\end{split}
\end{equation*}
Hence we obtain
\begin{equation*}
\begin{split}
    (a+j)^{\co}=(1-\gamma)^{-1}(1+a^{\co}j)^{-1}a^{\co}(1-\delta)^{-1}.
\end{split}
\end{equation*}
\end{proof}

Similar to Theorem~\ref{core-inverse}, we have

\begin{thm} \label{dual-core-inverse}
Let $R$ be a unital $\ast$-ring and $J(R)$ its Jacobson radical.
If $a\in R_{\co}$ with dual core inverse $a_{\co}$ and $j\in J(R)$,
then
\begin{center}
  $a+j\in R_{\co}$ if and only if $\varepsilon=(1-aa_{\co})j(1+a_{\co}j)^{-1}(1-a_{\co}a)=0$.
\end{center}
In this case,
\begin{equation*}
\begin{split}
    (a+j)_{\co}=(1-\xi)^{-1}(1+a_{\co}j)^{-1}a_{\co}(1-\rho)^{-1},
\end{split}
\end{equation*}
where
\begin{equation*}
\begin{split}
    \rho &~=(1+a_{\co}j)^{-1}a_{\co}a(1+a_{\co}j)(1-aa_{\co})(1+ja_{\co})^{-1},\\
    \xi &~=(1+a_{\co}j)^{-1}(1-a_{\co}a)j^{\ast}(a_{\co})^{\ast}(1+j^{\ast}a_{\co}^{\ast})^{-1}.
\end{split}
\end{equation*}
\end{thm}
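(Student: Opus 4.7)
The argument runs in tight parallel with the proof of Theorem~\ref{core-inverse}, with left- and right-sided roles swapped via the involution. The plan is: first, obtain $\varepsilon=0$ from $a+j\in R_{\co}$ by combining Lemma~\ref{(12)-inverse} with the standard ``von Neumann regular element of $J(R)$ must vanish'' argument; second, in the converse, exhibit a Hermitian right-annihilator of $a+j$ and apply the $\ast$-dual of Lemma~\ref{n=1-core-inverse} to conclude $a+j\in R_{\co}$; third, extract the explicit formula from Theorem~\ref{dual-core-inverse category}.

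For the necessity, suppose $a+j\in R_{\co}$ and set $\phi=(a+j)_{\co}$. Since $a_{\co}j\in J(R)$, the element $1+a_{\co}j$ is invertible, and since $a_{\co}\in a\{1,2\}$, Lemma~\ref{(12)-inverse} applies with $\varphi=a$, $\eta=j$ and $\varphi^{(1,2)}=a_{\co}$. As $\phi\in(a+j)\{1\}$, the ``moreover'' clause of that lemma gives $\phi\in\varepsilon\{1\}$. Hence $\varepsilon=\varepsilon\phi\varepsilon$ with $\phi\varepsilon\in J(R)$, so $1-\phi\varepsilon\in R^{-1}$, forcing $\varepsilon=0$.

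For the sufficiency, assume $\varepsilon=0$. The same application of Lemma~\ref{(12)-inverse} gives $(1+a_{\co}j)^{-1}a_{\co}\in(a+j)\{1,2\}$, and a manipulation dual to that in Theorem~\ref{core-inverse} yields the identity $(a+j)(1+a_{\co}j)^{-1}(1-a_{\co}a)=0$. The natural Hermitian annihilator is then
\begin{equation*}
q=(1+a_{\co}j)^{-1}(1-a_{\co}a)\bigl[(1+a_{\co}j)^{-1}\bigr]^{\ast},
\end{equation*}
which satisfies $q=q^{\ast}$ and $(a+j)q=0$. It remains to verify $a+j+q\in R^{-1}$. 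Writing $u=(1+a_{\co}j)^{-1}$, the identity $a+q=u\bigl[u^{-1}a(u^{\ast})^{-1}+(1-a_{\co}a)\bigr]u^{\ast}$ reduces the problem to the invertibility of $(a+p)+r$ with $p=1-a_{\co}a$ and $r\in J(R)$; the $\ast$-dual of Lemma~\ref{n=1-core-inverse} applied to $a\in R_{\co}$ gives $a+p\in R^{-1}$, and a Jacobson-radical perturbation handles first $r$ and then the further addition of $j$. Invoking the same dual lemma once more yields $a+j\in R_{\co}$.

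Finally, the explicit formula is obtained by applying Theorem~\ref{dual-core-inverse category} with $\varphi=a$ and $\eta=j$: since $\varepsilon=0$ one has $f=a+j$, substitution into the definitions of $\rho$ and $\xi$ there reproduces the $\rho,\xi$ displayed in the statement, and the expression $(a+j)_{\co}=(1-\xi)^{-1}(1+a_{\co}j)^{-1}a_{\co}(1-\rho)^{-1}$ is immediate from the formula for $f_{\co}$. The principal obstacle I anticipate is the bookkeeping in the sufficiency step, namely confirming that each ``correction term'' produced by expanding $a+j+q$ genuinely lies in $J(R)$ so that the Jacobson perturbation argument applies cleanly and the invertibility of $a+p$ is preserved throughout.
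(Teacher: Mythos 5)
Your proposal is correct and follows essentially the route the paper intends: the paper proves Theorem~\ref{dual-core-inverse} by dualizing Theorem~\ref{core-inverse}, i.e.\ Lemma~\ref{(12)-inverse} plus the ``regular element of $J(R)$ is zero'' argument for necessity, a Hermitian right-annihilator $q=(1+a_{\co}j)^{-1}(1-a_{\co}a)[(1+a_{\co}j)^{-1}]^{\ast}$ together with the $\ast$-dual of Lemma~\ref{n=1-core-inverse} for sufficiency, and Theorem~\ref{dual-core-inverse category} for the explicit formula. Your intermediate identities (e.g.\ $(a+j)(1+a_{\co}j)^{-1}(1-a_{\co}a)=0$ and the factorization of $a+q$) check out, so the argument is complete.
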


\vspace{0.2cm} \noindent {\large\bf Acknowledgements}

This research is supported by the National Natural Science Foundation of China (No.11201063 and No.11371089); the Natural Science Foundation of Jiangsu Province (No.BK20141327).

\end{document}